\newtheorem{theorem}{Theorem}[section]
\newtheorem{proposition}[theorem]{Proposition}
\newtheorem{corollary}[theorem]{Corollary}
\newtheorem{lemma}[theorem]{Lemma}
\newtheorem{question}[theorem]{Question}
\newtheorem{remark}[theorem]{Remark}
\newtheorem{definition}[theorem]{Definition}
\newtheorem{example}[theorem]{Example}
\newcommand{\N}{\mathbb N}
\newcommand{\End}{\operatorname{End}}
\newcommand{\im}{\operatorname{im}}
\newcommand{\makeset}[2]{\left\lbrace #1 \;\middle|\;
 \begin{tabular}{@{}l@{}}
   #2
  \end{tabular}
  \right\rbrace}
\newcommand{\makepres}[2]{\left\langle #1 \;\middle|\;
 \begin{tabular}{@{}l@{}}
   #2
  \end{tabular}
  \right\rangle}
\newcommand{\mb}[1]{\mathbb{#1}}
\newcommand{\mc}[1]{\mathcal{#1}}
\renewcommand{\a}{\alpha}
\renewcommand{\l}{\lambda}
\newcommand{\tree}{\operatorname{tree}}
\DeclareMathOperator{\pst}{pst}
\DeclareMathOperator{\PEnd}{PEnd}
\DeclareMathOperator{\tot}{\operatorname{tot}}
\DeclareMathOperator{\rt}{root}
\DeclareMathOperator{\shrub}{shrub}
\email{bldw@st-andrews.ac.uk}
\email{luna.elliott142857@gmail.com}
\keywords{Thompson groups, J\'{o}nsson-Tarski Algebras, Universal algebra, Finite presentations, Thompson Monoids}
\title[Thompson Monoids And J\'{o}nsson-Tarski Algebras]{Finite Presentability of Brin-Higman-Thompson Monoids via Free Multidimensional J\'{o}nsson-Tarski Algebras}
\subjclass[2020]{20M20, 20M30, 08A35}
\author{Bill de Witt, Luna Elliott}
\begin{document}
\maketitle
\begin{abstract}
We show that the monoids \(\operatorname{tot} M_{k,1}\) introduced by Birget \cite{ birget2009monoid, birget2020monoid} and their generalizations \(\operatorname{tot} nM_{k,r}\), which extend the Brin-Higman-Thompson groups, can be realized as the endomorphism monoids of higher-dimensional J\'{o}nsson-Tarski algebras. We also show how elements of these monoids can be thought of as ``rewrite rules".
We use these representations to show that the monoids are finitely presented.
\end{abstract}

\tableofcontents

\section{Introduction}

Thompson's group \(V\) (along with \(T\leq V\)) was the first known example of a finitely presented infinite simple group. Since then, the group \(V\) has seen great attention in the literature. See for example \cite{higman1974finitely,cannon1996introductory, belk2014conjugacy, bleak2016further,  bleak2017infinite}.

A monoid version of \(V\) was first considered by Thompson. Thompson has given talks about this monoid, but to our knowledge, he has not published on the topic. 
Birget describes in \cite{birget2009monoid} several monoid extensions \(M_{k,1}\) of the well-studied groups \(G_{k,1}\) (where \(G_{2,1}=V\)). These extensions involve replacing the bijective ``prefix exchange" maps of \(G_{k,1}\) with partial maps on prefix codes, which need not be injective and may not have incomparable image sets. Birget also introduced the following submonoids: \(\operatorname{sur} M_{k,1}\), using partial functions whose range contains a complete prefix code; \(\operatorname{tot} M_{k,1}\), using non-partial functions; and \(\operatorname{Inv} M_{k,1}\), using partial functions with incomparable image sets. The monoid considered by Thompson was \(\operatorname{tot} M_{2,1}\).

Birget extended the definition in \cite{birget2020monoid} to higher-dimensional analogs, \(nM_{k,1}\), which extend \(nG_{k,1}\). It is evident that the aforementioned submonoids extend in the same manner. We focus on the multi-rooted generalizations \(\operatorname{tot} nM_{k,r}\) of the non-partial monoids.

The variety of J\'{o}nsson-Tarski algebras was first introduced in \cite{jonsson1961two} as an example of a variety whose free objects exhibit unusual properties related to the generation of these free objects. Some work has been done on generalizations of the concept, such as in \cite{dudek1979universal, goetz1960bases}, which define varieties \(\mc{A}_{m,n}\). For our purposes, we will use \(\mc{A}_{1,n}\) and its higher-dimensional analogs, which are not equivalent to \(\mc{A}_{m,n}\) for dimension \(m\).

Higman worked with J\'{o}nsson-Tarski algebras and the generalizations \(G_{n,r}\) in \cite{higman1974finitely}, and this was extended by Mart\'{i}nez-P\'{e}rez and Nucinkis in \cite{martinez2013bredon}.  We show that analogously, \(\operatorname{tot} nM_{k,r}\) is the endomorphism monoid of an \(r\)-generated free algebra.

\begin{theorem}\label{main1}
For \(r, n, k \in \mathbb{N}\), with \(r, n \geq 1\) and \(k \geq 2\), the following monoids are isomorphic:
\begin{enumerate}
\item[i)] \(\operatorname{tot} nM_{k,r}\).
\item[ii)] The endomorphism monoid of an \(r\)-generated free algebra in the \(n\)-dimensional \(k\)-ary J\'{o}nsson-Tarski variety of \cref{def:nVk}.
\end{enumerate}
\end{theorem}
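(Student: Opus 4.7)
The plan is to construct an explicit concrete model of the free algebra in question, then exhibit a natural monoid isomorphism between its endomorphism monoid and \(totnM_{k,r}\) by matching the structural data on both sides (prefix codes on the Birget side, vertex strata in the free algebra on the Jónsson--Tarski side).

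First I would build the free algebra \(F\) on \(r\) generators explicitly. For \(n=1\), following Higman, the carrier of \(F\) is the vertex set of a forest of \(r\) disjoint \(k\)-ary trees, with the \(k\)-ary fundamental operation \(\omega\) taking a \(k\)-tuple of children to the parent, and its (\(k\) unary) inverse components sending a vertex to its \(i\)-th child; the \(r\) generators are the roots. For general \(n\), I would model \(F\) as a suitable product structure carrying \(n\) commuting copies of the Jónsson--Tarski operations, one per dimension, so that elements can be coordinatised by \(n\)-tuples drawn from independent \(k\)-ary trees. I then verify that this algebra satisfies the defining identities of the \(n\)-dimensional \(k\)-ary J\'onsson--Tarski variety and has the required universal property; the critical fact here is that every element of \(F\) admits a unique decomposition down to a finite ``\(n\)-dimensional prefix antichain'' under the roots, which is exactly the combinatorial data that underlies Birget's description of \(nM_{k,r}\).

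Second I would set up the isomorphism in both directions. Given an endomorphism \(\varphi\in\End(F)\), its action is determined by the \(r\)-tuple \((\varphi(x_1),\ldots,\varphi(x_r))\); by repeatedly applying the J\'onsson--Tarski inverse operations one descends each generator to a prefix antichain on which \(\varphi\) is defined by a total (not necessarily injective) function into \(F\), yielding an element of \(totnM_{k,r}\). Conversely, a total function representing an element of \(totnM_{k,r}\) prescribes images of an antichain of descendants of the generators, which by the universal property extends uniquely to an endomorphism of \(F\). I would then check that the two assignments are mutually inverse and that composition in \(totnM_{k,r}\) (as defined via common refinement of prefix codes) matches composition of endomorphisms; the identity maps across.

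The main obstacle is showing the correspondence is well-defined and bijective without becoming entangled in representative choices. On the Birget side, the same element of \(totnM_{k,r}\) is represented by infinitely many pairs of prefix-code/value tuples, related by the operation of ``expansion'' along the tree; I must verify that all expansions of a representative induce the same endomorphism, which reduces to the compatibility of expansion with the algebra operations. On the algebra side, surjectivity onto \(\End(F)\) requires that every endomorphism actually descends to a \emph{finite} prefix-code description, i.e., a single antichain below the generators suffices to describe \(\varphi\) globally. This is where the universal property does the heavy lifting: since the generators are finite in number and each \(\varphi(x_i)\in F\) lies at a finite depth, one takes the tensor of all the relevant depths across the \(n\) dimensions to produce the required antichain. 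Establishing this finiteness carefully, along with the multiplicative compatibility with Birget's composition, is the technical heart of the argument.
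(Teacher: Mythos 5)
Your overall architecture --- identifying complete prefix codes below the generators with free generating sets lying in the tree, defining the two maps by where generators/leaves are sent, and isolating expansion-invariance and finite descent as the two things to verify --- is the same as the paper's. But your first step contains a genuine error which, taken literally, would make the theorem false. The free J\'onsson--Tarski algebra on \(r\) generators is \emph{not} the vertex set of a forest of \(r\) disjoint \(k\)-ary trees. That forest is closed under the unary operations but not under the \(k\)-ary ones: \(\lambda\) applied to a \(k\)-tuple of vertices that is not an ordered sibling family produces a genuinely new element, e.g.\ \(((\mathbf{0})\alpha_{0,1},(\mathbf{0})\alpha_{0,0})\lambda_0\), which the paper explicitly exhibits as an element outside \(\im(\tree_{n,k,r})\). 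If the carrier were just the forest (so that every element were a vertex), then every endomorphism would have to send each generator to a vertex, and \(\End(F)\) would omit even the prefix-exchange elements of \(nG_{k,r}\subseteq totnM_{k,r}\): the map swapping the cones under \((\mathbf{0})\alpha_{0,0}\) and \((\mathbf{0})\alpha_{0,1}\) sends the generator \(\mathbf{0}=((\mathbf{0})\alpha_{0,0},(\mathbf{0})\alpha_{0,1})\lambda_0\) to \(((\mathbf{0})\alpha_{0,1},(\mathbf{0})\alpha_{0,0})\lambda_0\), which is not a vertex. The forest is only the substructure \(\im(\tree_{n,k,r})\); the paper instead takes \(\mb{F}_{n,k,r}\) to be the term algebra modulo the defining congruence. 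Relatedly, your claimed ``unique decomposition down to a finite prefix antichain'' is false: \(x=(x\alpha_{i,0},\dots,x\alpha_{i,k-1})\lambda_i\) already gives two distinct decompositions of a generator, and this non-uniqueness (the failure of properties 1 and 2 of free generation) is the defining feature of these varieties, not an incidental one.

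The error propagates into your surjectivity argument. You say each \(\varphi(x_i)\) ``lies at a finite depth'' and one descends to that depth; in the forest model this is trivial, but in the actual free algebra \(\varphi(x_i)\) is a general \(\lambda\)-term over tree elements, and the substantive claim is that applying a sufficiently deep \emph{flat} pattern of unary operations to an arbitrary element of \(\mb{F}_{n,k,r}\) lands in \(\im(\tree_{n,k,r})\). The paper proves this by choosing \(N\) larger than the number of \(\lambda\)-occurrences in a term representing \((\rho)\varphi\) and repeatedly using the derived identities of Proposition~\ref{prop:nVk1} to pass the \(\alpha\)'s through the \(\lambda\)'s and peel them off; this is exactly the step your sketch elides by assuming the images are already vertices. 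The remainder of your plan (expansion-invariance of the induced endomorphism, extension by the universal property in the converse direction, compatibility with composition via common refinement) is sound and matches the paper once the correct model of the free algebra is in place.
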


The isomorphism established in Theorem \ref{main1} is used to prove our second main result.

\begin{theorem}\label{main2}
For \(r, n, k \in \mathbb{N}\), with \(r, n \geq 1\) and \(k \geq 2\), the monoid \(\operatorname{tot} nM_{k,r}\) is finitely presented.
\end{theorem}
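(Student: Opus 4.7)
The plan is to leverage Main Theorem 1 to realise $totnM_{k,r}$ as $\End(F)$, where $F$ is the $r$-generated free algebra in the $n$-dimensional $k$-ary J\'onsson--Tarski variety. Any endomorphism $\phi$ is determined by its values on a basis of $F$. Moreover, by pushing basic expansions through $\phi$ in the domain and pulling basic expansions back through $\phi$ in the codomain, every endomorphism admits a description as a triple: an expansion of the $r$ roots in the domain, a function on the resulting finite leaf set, and an expansion of the roots of $F$ in the codomain --- essentially a ``tree-pair-with-map'' analogous to the standard Thompson-style normal form.

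First I would exhibit a concrete finite generating set. Natural choices are: \textit{(a)} adjacent transpositions of the $r$ roots (giving the symmetric group $S_r$ acting on the basis); \textit{(b)} for each of the $n$ operations of the variety, a single ``basic expansion'' performed at the first root; \textit{(c)} a ``contraction'' for each of the $n$ operations, inverse to the corresponding expansion on appropriate leaves; and \textit{(d)} one idempotent ``merge'' that collapses the first two roots. Using \textit{(a)}, expansions and merges at any position or at any depth can be realised by first permuting the desired leaves into the distinguished position, acting, and permuting back. Any endomorphism can then be assembled as ``expand until the domain is the relevant generalized basis, apply a permutation composed with merges, then contract on the codomain side''.

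Next I would record a finite list of relations: the Coxeter relations for $S_r$; commutation of expansions at disjoint coordinates (both in the $n$-dimensional sense and in the sense of disjoint positions in the tree); permutation--expansion relations that describe how an expansion at position $i$ conjugates to an expansion at position $\sigma(i)$; idempotence and commutation of merge with permutations stabilising its support; contraction--expansion relations saying that compatible pairs cancel; and local $n$-dimensional relations that reflect commutation of operations in coordinates of the J\'onsson--Tarski variety. These should suffice to rewrite every word in the generators into the proposed normal form ``expansions $\cdot$ permutation $\cdot$ merges $\cdot$ contractions''.

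The main obstacle is completeness of the relations, i.e.\ showing that any two words representing the same endomorphism can be converted into one another using only the finite list. This is a confluent-rewriting / Tietze argument on tree-pair-with-map representatives. The subtle point is the interaction between merges and expansions: a merge followed by an expansion can sometimes be rewritten as an expansion followed by a collection of merges on the larger leaf set, and one must verify that all such ``percolation'' moves are consequences of the stated finite list via an induction on the total depth of the trees involved. I expect this inductive step --- essentially showing that any common refinement of two tree-pair representatives is reachable by local moves --- to be the heart of the proof, with the rest (generation, normal-form existence, reduction to the group case $G_{k,r}$ whose presentation is already known from Higman) being relatively routine.
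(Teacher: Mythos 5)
Your opening moves match the paper: both use Main Theorem 1 to pass to \(\End(\mb{F}_{n,k,r})\), and both establish finite generation by decomposing an arbitrary element as (invertible reshaping) \(\cdot\) (label/multiplicity correction) \(\cdot\) (invertible reshaping). Your proposed generating set would need repair --- for \(r=1\) there are no two roots to merge, for \(n\geq 2\) the group of units is the Brin--Thompson group \(nG_{k,r}\) rather than Higman's \(G_{k,r}\) so Higman's presentation is not the one you need, and you also need non-surjective generators such as the map sending \((\mathbf{0},\varepsilon)\mapsto(\mathbf{0},0)\) --- but these are fixable, and generation is the easy half. The genuine gap is in the completeness of the relations, which you yourself flag as ``the heart of the proof'' and then only conjecture. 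The obstruction is that the ``percolation'' moves you describe --- a merge followed by an expansion at depth \(d\) rewrites to an expansion followed by a family of merges --- form an infinite family indexed by \(d\), and in a monoid you cannot reduce the depth-\(d\) instance to the depth-\(1\) instance by conjugating by expansion elements, because the relevant elements are not invertible. An induction on tree depth does not by itself yield a \emph{finite} relation set; you need a finite scheme that provably generates all the deeper instances, and your proposal does not contain one.

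The paper's solution to exactly this problem is its main technical content and is quite different from a normal-form or confluent-rewriting argument. It passes to an anti-isomorphic monoid \(nRel_{k,r}\) of relations on labelled free generating sets, introduces \emph{deferments} \(f_w\) (a copy of \(f\) acting inside the cones of a root system \(w\)), and imposes relations that make local data propagate to all depths: the self-similarity relations \(x=U\prod_{s}(x\pi^{(0)s}\cdots\pi^{(r-1)s})Def_s\) and \(xU=U\prod_s (x)Def_s\) (sets 6 and 7 in Lemma \ref{lem:pres}), which express each generator in terms of its deferred restrictions to depth-one cones, together with the relations \((x)SDef=(p^{r_2})^{-1}xp^{r_2}\) for \(x\) already a deferment (set 8), which show that iterated deferments are conjugates of single deferments, so only one level of deferment need be postulated. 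From these the paper deduces (Claims 2 and 3) that an element of the abstractly presented monoid is determined by its right products with sufficiently deep descent maps \(\pi\in P_d\), and relation set 4 guarantees that two words equal in \(nRel_{k,r}\) have provably equal such products. To salvage your approach you would need to supply an analogue of this deferment/self-similarity mechanism; without it the finite list you propose will not close up.
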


We give this presentation explicitly in the case of tot\(M_{2,1}\) in \cref{explictpresfinal}.
\begin{center}
\textbf{Acknowledgements}

We would like to thank Matt Brin and Collin Bleak for their input and advice in the initial construction of this paper. In particular, the proof of the finite presentability results in this paper is based on Thompson's original proof in the one-dimensional case, which we were made aware of by Matt Brin. 
The publicly available Python package (hosted on GitHub) for working in \(V\), authored by Nathan Barker, Andrew Duncan and David Robertson was of great assistance in finding words for the deferments in \cref{explicit}.
\end{center}

\section{Definitions of Objects}

In this section, we introduce the definitions of the relevant monoids and the algebraic objects involved. We keep these definitions brief; for a more detailed introduction to $\operatorname{tot}M_{k,1}$, see \cite{birget2009monoid}.

\subsection{Notes on Notation}

For the purposes of this paper, we establish the following notational conventions:

\begin{itemize}
    \item \textbf{Sets and Indices:} We identify the set of natural numbers $\mathbb{N}$ with the first infinite ordinal $\omega = \{0, 1, 2, \ldots\}$. 
    \item \textbf{Function Composition:} We compose functions from left to right, and write functions on the right of their inputs.
    \item \textbf{Restriction and Conjugation:}
    \begin{itemize}
        \item For a function $f$, we denote by $f|_A$ the restriction of $f$ to the set $A$.
        \item For elements $f, g$ of a semigroup with $g$ invertible, we define the conjugation of $f$ by $g$ as $f^g := g^{-1} \circ f \circ g$.
    \end{itemize}
    \item \textbf{Words and Free Monoids:} For an alphabet $A$, we denote the free monoid on $A$ by $A^*$. We call the elements of $A$ ``letters" and the elements of $A^*$ ``words". The set $A^{\omega}$ is the set of infinite words. The length of a word $w$ is written as $|w|$.
    \item \textbf{Indexing and Tuples:} We view words of length $m$ as tuples in $A^m$ and as functions from $\{0, \dots, m-1\}$ to $A$. We write $(i)w$ for the $i$-th letter of a word $w$, indexed from $i=0$. This notation is also used for tuples. The empty word is denoted by \(\varepsilon\).
    \item \textbf{Roots and Generators:} We write numbers in mathbf ($\mathbf{0}, \mathbf{1}, \ldots$) when they are intended to represent roots or generator indices. This is purely for the purposes of readability and there is no formal difference between \(\mathbf{0}\) and \(0\).
\end{itemize}

\subsection{Words, Trees, and Prefix Codes}

We begin by establishing the necessary terminology for words and structures, then proceed to define the monoids and the relevant variety of algebras from Theorem 1.1.

\begin{definition}[Alphabets and $nT_{k,r}$]
Suppose that $k \geq 2$ and $r, n \geq 1$. Let $X_r := \{\mathbf{0}, \mathbf{1}, \ldots, \mathbf{r-1}\}$, $A_k := \{0, 1, \ldots, k-1\}$, and $nT_{k,r} := X_r \times (A_k^*)^n$.
\end{definition}

The monoid $(A_k^*)^n$ acts on the set $nT_{k,r}$ by right multiplication on its second entry.
The set $nT_{k,r}$ is the $r$-rooted, $n$-dimensional $k$-ary analogue of the binary tree's vertices. We think of there being an edge from an element of $nT_{k,r}$ to another if the latter can be obtained from the former by appending a single letter to one of the words in its second component.
Note that this set is generally not a tree, and elements often have multiple ``parent" nodes (for instance, in $2T_{2,1}$, the element $(\mathbf{0},(0,0))$ can be obtained from $(\mathbf{0},(\varepsilon,0))$ by appending $0$ to the first component, or from $(\mathbf{0},(0,\varepsilon))$ by appending $0$ to the second component).

\begin{definition}[Cantor Spaces]
If $k \geq 2$ and $n \geq 1$, then we define $\mathfrak{C}_{n, k}:=(A_k^\omega)^n$ to be the $k$-ary $n$-dimensional Cantor space.
Moreover, if $r \geq 1$, then we define $\mathfrak{C}_{n, k, r}:=X_r \times \mathfrak{C}_{n, k}$ to be the $r$-rooted $k$-ary $n$-dimensional Cantor space, equipped with the usual product topology.
\end{definition}

These spaces are famously compact metrizable spaces and have a basis consisting of the clopen sets defined in \cref{def:cones}.

\begin{definition}[Shrubs, Shrubberies, and Depth]
Suppose that $k \geq 2$ and $n,r \geq 1$.
We call an element of $(A_k^* \cup A_k^\omega)^n$ a \textit{shrub}, and an element of $X_r \times (A_k^* \cup A_k^\omega)^n$ a \textit{shrubbery}.
In particular, the sets $nT_{k,r}$ and $\mathfrak{C}_{n,k,r}$ consist of shrubberies.
A shrubbery $w$ is a pair, the first entry being the \textit{root} $w^{\rt}$ and the second entry being the \textit{shrub} $w^{\shrub}$.

We define the \textit{depth} $|s|_{\text{depth}}$ of a shrub $s \in (A_k^* \cup A_k^\omega)^n$ to be the least $D \in \mathbb{N} \cup \{\omega\}$ such that for all $i < n$, we have $|(i)s| \leq D$.
\end{definition}

From the action of $A_k^*$ on $nT_{k,r}$, we can act on a finite shrubbery with a finite shrub. We extend this action to infinite shrubs in the natural fashion (we will not act on infinite shrubberies).

\begin{definition}[The Partial Order]\label{shruborder}
If $u, v$ are words, shrubs, or shrubberies, then we define a partial order $u \leq v$ if there is a shrub $s$ such that $u s = v$. In particular, for words, this is the usual ``prefix" partial order (as one dimensional shrubs are just words).
\end{definition}

\begin{definition}[Cones]\label{def:cones}
Suppose that $k \geq 2$ and $n,r \geq 1$. For a finite shrubbery $w$, we define the \emph{cone} under $w$ as
\[ w \mathfrak{C}_{n, k} := \{ u \in \mathfrak{C}_{n, k, r} \mid u \geq w \}.\]
This is the set of all shrubberies in $\mathfrak{C}_{n,k,r}$ that share the same root as $w$ and such that the words in their shrub component have the corresponding words in $w^{\shrub}$ as a prefix.
\end{definition}

One very natural notion in $nT_{k,r}$ is that of a complete prefix code. In the 1-dimensional case, this is equivalent to a complete antichain in the order on the words. Birget refers to these as ``maximal joinless codes" in \cite{birget2009monoid,birget2020monoid}, but we adopt the terminology of ``complete prefix code", as is common in works on the $1$-dimensional Higman-Thompson groups.

\begin{definition}[Pseudotrees and Complete Prefix Codes]\label{depthdef}
Suppose that $k \geq 2$ and $n,r \geq 1$.
For a finite subset $S$ of $nT_{k, r}$, we define $\operatorname{Leaves}(S)$ to be the set of maximal elements of $S$ with respect to the order in \cref{shruborder}.
A \emph{pseudotree} $T$ of $nT_{k,r}$ is a finite downwards closed subset (using the order of \cref{shruborder}) such that the cones of the leaves of $T$ partition $\mathfrak{C}_{n,k,r}$.
In this case, the set $\operatorname{Leaves}(T)$ is called a \emph{complete prefix code}.
We define the \textit{depth} of a pseudotree $T$ by $|T|_{\text{depth}} := \max \{|w^{\shrub}|_{\text{depth}} \mid w \in T \}$.
\end{definition}

In other words, the depth of $T$ is the length of the longest word in the shrubs of the shrubberies of $T$. Note again that a pseudotree is not necessarily a tree (it is a tree in the case $n=1$).

\begin{remark}
Complete prefix codes are finite because $\mathfrak{C}_{n,k,r}$ is a compact space, and any open partition of a compact space must be finite.
\end{remark}

Complete prefix codes are in bijective correspondence with pseudotrees. In particular, the leaves of any pseudotree are a complete prefix code, and every complete prefix code $P$ is the set of leaves of a unique pseudotree.

\begin{definition}[Pseudotree of a Prefix Code]
Suppose that $k \geq 2$ and $n,r \geq 1$.
If $P \subseteq nT_{k,r}$ is a complete prefix code, then we define $\operatorname{pst}(P)$ to be the unique pseudotree whose maximal elements are $P$. This pseudotree is precisely the downwards closure of $P$ in the order from \cref{shruborder}.
\end{definition}

The standard process for generating and modifying trees, often called expansion, is very useful when defining and working with higher dimensional pseudotrees. This idea is commonly used in work with Higman-Thompson groups and their extensions, for instance in \cite[Lemma 3.3]{lawson2021polycyclic}.
Later we will also use expansions to modify free generating sets of J\'{o}nsson-Tarski algebras, similar to Chapter 2 of \cite{higman1974finitely}.
We formulate expansions using multisets because, for technical reasons, the results in \cref{expanding_prefix_codes} and \cref{lem:FreeGenExpansion} will fail if we only consider usual sets.

\begin{definition}[Expansions]\label{expansions}
Suppose that $k \geq 2$ and $n,r \geq 1$.
Let $M$ be a multisubset of $nT_{k,r}$, and let $a \in M$ for some $i < n$.
For each $0 \leq l < k$, define $a_l \in nT_{k,r}$ such that
\begin{enumerate}
    \item $a_l^{\rt}=a^{\rt}$,
    \item $(i)a_l^{\shrub}=(i)a^{\shrub} \cdot l$,
    \item for all $i' \neq i$, we have $(i')a_l^{\shrub}=(i')a^{\shrub}$.
\end{enumerate}
The \emph{elementary expansion} of $M$ about $a$ in dimension $i$ is the multiset obtained from $M$ by removing one copy of $a$ from $M$ and adding a copy of each $a_l$ for $l < k$. An expansion is then the result of a sequence of elementary expansions.
\end{definition}

\begin{proposition}\label{expanding_prefix_codes}
    Suppose that $k \geq 2$, $n,r \geq 1$, and $M$ is a multisubset of $nT_{k,r}$.
    Any expansion of $M$ is a complete prefix code if and only if $M$ is a complete prefix code.
\end{proposition}
\begin{proof}
    An elementary expansion of a shrubbery $a$ replaces the cone $a\mathfrak{C}_{n,k}$ with the union of $k$ disjoint subcones $\bigcup_{l=0}^{k-1} a_l\mathfrak{C}_{n,k}$. Since this union is equal to the original cone, the replacement preserves the property that the cones of the multiset elements partition (or don't partition) the entire space $\mathfrak{C}_{n,k,r}$. The result follows by induction on the sequence of elementary expansions.
\end{proof}

For example, when $n=2$, $k=2$, $r=1$, and $M$ is the multiset \[\{(\mathbf{0}, (01,11)),(\mathbf{0},(01,11)),(\mathbf{0},(\varepsilon, 111))\}_{\text{multiset}},\] the elementary expansion of $M$ in dimension $1$ about the first copy of $(\mathbf{0}, (01,11))$ is the multiset
\[ \{(\mathbf{0}, (01,11)),(\mathbf{0},(01,110)),(\mathbf{0},(01,111)),(\mathbf{0},(\varepsilon, 111)) \}_{\text{multiset}}. \]
Note that dimension $1$ is the second of the two dimensions here due to zero indexing.

We denote the monoid of continuous maps from $\mathfrak{C}_{n, k,r}$ to itself by $\operatorname{End}(\mathfrak{C}_{n, k,r})$. The main monoid of interest is a submonoid of this monoid.

\begin{definition}[Prefix Exchange Maps and $\operatorname{tot} nM_{k,r}$]\label{def:prefix_exchange}
Suppose that $k \geq 2$ and $n,r \geq 1$, and that we have a pair $(D,h)$ where $D$ is a pseudotree of $nT_{k,r}$ and $h: \operatorname{Leaves}(D) \to nT_{k,r}$. We call such a pair a \emph{pseudotree pair}. We define the map $f:\mathfrak{C}_{n, k,r} \to \mathfrak{C}_{n, k,r}$ represented by this pseudotree pair by
$$ (d \cdot w) f = ((d)h) \cdot w \quad \text{for all } d \in \operatorname{Leaves}(D) \text{ and shrubs } w \in \mathfrak{C}_{n, k}. $$
We define $\operatorname{tot} nM_{k,r}$ to be the monoid of maps $f \in \operatorname{End}(\mathfrak{C}_{n, k,r})$ represented by pseudotree pairs.
\end{definition}

This monoid is equivalent to $\operatorname{tot} M_{k,1}$ as defined by Birget in \cite{birget2009monoid} for $n=1$ and $r=1$. It is also equivalent to the submonoid of $nM_{k,r}$ defined in \cite{birget2020monoid} that contains only the non-partial functions. The groups $nG_{k,r}=nV_{k,r}$ are then defined to be the groups of units of \(\operatorname{tot} n M_{k,r}\).
Comparing our definition to a common definition of $nG_{k,r}$, where we map the leaves of a pseudotree via a bijection to the leaves of another pseudotree, here we can map the leaves of a pseudotree to any set of shrubberies. 

\subsection{Varieties}

In \cite{jonsson1961two}, J\'{o}nnson and Tarski define a variety \(K_2\), which was constructed specifically to not have the properties: 
\begin{enumerate}
    \item An algebra in a variety \(K\) which is freely generated by a finite set cannot be generated by a set with fewer elements;
    \item If an algebra in a variety \(K\) is freely generated by a finite set, then any generating set of the same size is a free generating set.
\end{enumerate}

We now define the \(n\)-dimensional \(k\)-ary J\'{o}nnson-Tarski varieties that we will use for \cref{main1}. These varieties have appeared before in \cite{martinez2013bredon}. 
The idea of a 1-dimensional J\'{o}nnson-Tarski variety is to represent bijections from \(A^k\) to \(A\)  for some set \(A\), so we have a \(k\)-ary operation (the bijection) and \(k\) unary operations which `undo' the \(k\)-ary operation. It is not difficult to see that the general \(n\mc{J}_{k}\) also do not have properties (1) and (2) and this will come up in later proofs.

\begin{definition}[The Varieties] \label{def:nVk}
Let \(n\mc{J}_{k}\) be the variety of algebras with \(n\) \(k\)-ary operations \((\l_i)_{i<n}\) and \(nk\) unary operations \((\a_{i,j})_{i<n, j<k}\), satisfying the following equations:
\begin{enumerate}
    \item[i)] \((x\a_{i,0},\dots,x\a_{i,k-1})\l_i=x\) for all \(x\) and \(i<n\),
    \item [ii)] \((x_0,\dots,x_{k-1})\l_i\a_{i,j}=x_j\) for all \(x_0,\dots,x_{k-1},j<k\),
    \item [iii)] \(\alpha_{i, j}\alpha_{i', j'} =\alpha_{i', j'}\alpha_{i, j}\) for all \(i,i'<n\) with \(i\neq i'\) and \(j,j'<k\).
\end{enumerate}
We denote by  \(\mb{F}_{n,k,r}\), the algebra in the variety \(n\mc{J}_{k}\) freely generated by the set \(\{\mathbf{0},\mathbf{1},\ldots, \mathbf{r-1}\}\).
\end{definition}

The second of the monoids from \cref{main1} is the endomorphism monoid \(\End(\mb{F}_{n,k,r})\) of the algebra \(\mb{F}_{n,k,r}\). In the next section we solve the word problem in these algebras so we can more easily work with endomorphisms.

\section{Free objects in the multidimensional J\'{o}nsson-Tarski varieties}

We need to be able to describe the elements of $\mathbb{F}_{n,k,r}$. Each element is obtained from the free generating set by a sequence of operations, so we start by establishing some important laws which are consequences of the definition of the variety $n\mathcal{J}_k$. 

\begin{proposition}\label{prop:nVk1}
Suppose that $n,r \geq 1$ and $k \geq 2$. The following laws hold for all algebras in the variety $n\mathcal{J}_k$.
For all $i, i' < n$ with $i \neq i'$ and $l, m < k$:
\begin{align}
\label{eq1} (x_0, \dots, x_{k-1})\lambda_i \alpha_{i', l} \alpha_{i, m} &= x_m \alpha_{i', l} \\
\label{eq2} (x_0 \alpha_{i', l}, \dots, x_{k-1} \alpha_{i', l}) \lambda_i &= (x_0, \dots, x_{k-1}) \lambda_i \alpha_{i', l} \\
\label{eq3} \Big((x_{0,0}, \dots, x_{0, k-1}) \lambda_i, &\dots, (x_{k-1,0}, \dots, x_{k-1, k-1}) \lambda_i\Big) \lambda_{i'}  \\
&= \Big((x_{0,0}, \dots, x_{k-1,0}) \lambda_{i'}, \dots, (x_{0,k-1}, \dots, x_{k-1,k-1}) \lambda_{i'}\Big) \lambda_i \nonumber.
\end{align}
\begin{figure}
\centering
\includegraphics[]{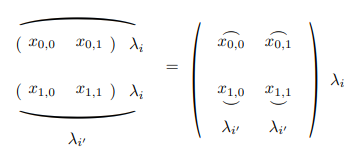}
 \caption{Pictorial representation of Proposition \ref{prop:nVk1} iii)}
 \label{fig:prop:nVK1iii}
\end{figure}

\end{proposition}
\begin{proof}
Proof of \cref{eq1}: By the defining laws of $n\mathcal{J}_k$ (specifically parts iii) and then ii) of \cref{def:nVk}), we have
\[(x_0,\dots,x_{k-1})\lambda_i \alpha_{i', l} \alpha_{i,m} = ((x_0,\dots,x_{k-1})\lambda_i \alpha_{i,m}) \alpha_{i',l} = x_{m} \alpha_{i', l}.\]

Proof of \cref{eq2}:  Applying \cref{eq1} followed by the first law of \cref{def:nVk} yields:
\begin{align*}
(x_0 \alpha_{i', l},\dots,x_{k-1} \alpha_{i', l})\lambda_i &= ((x_0,\dots,x_{k-1})\lambda_i \alpha_{i', l} \alpha_{i,0},\dots,(x_0,\dots,x_{k-1})\lambda_i \alpha_{i', l} \alpha_{i,k-1})\lambda_i\\
 &= (x_0,\dots,x_{k-1})\lambda_i \alpha_{i', l}.
\end{align*}

Proof of \cref{eq3}: Let
\[L := \Big((x_{0,0},\dots,x_{0,k-1})\lambda_i,\dots,(x_{k-1,0},\dots,x_{k-1,k-1})\lambda_i\Big)\lambda_{i'},\]
\[R := \Big((x_{0,0},\dots,x_{k-1,0})\lambda_{i'},\dots,(x_{0,k-1},\dots,x_{k-1,k-1})\lambda_{i'}\Big)\lambda_i.\]
We show that $L=R$.
First, apply the $\alpha$ operations to $L$. By applying Definition \ref{def:nVk} iii) and ii), we have:
\[ L \alpha_{i,l} \alpha_{i',m} = L \alpha_{i',m} \alpha_{i,l} = x_{m,l} = R \alpha_{i,l} \alpha_{i',m} \quad \text{for all } l, m < k. \]
We can show that \(L=R\) by iterating the first law of $\ref{def:nVk}$. For all \(l<k\) we have:
\begin{align*}
    L \alpha_{i,l} &= (L \alpha_{i,l} \alpha_{i',0}, L \alpha_{i,l} \alpha_{i',1},\ldots, L \alpha_{i,l} \alpha_{i',k-1})\lambda_{i'} \\
    &= (R \alpha_{i,l} \alpha_{i',0}, R \alpha_{i,l} \alpha_{i',1},\ldots, R \alpha_{i,l} \alpha_{i',k-1})\lambda_{i'} \\
    &= R \alpha_{i,l}.
\end{align*}
Thus:
\[ L = (L \alpha_{i,0}, L \alpha_{i,1},\ldots, L \alpha_{i,k-1})\lambda_{i} = (R \alpha_{i,0}, R \alpha_{i,1},\ldots, R \alpha_{i,k-1})\lambda_{i} = R.\]
\end{proof}

We now describe an example of an object in the variety $n\mathcal{J}_{k}$. We will use this object to describe our free algebras and to show that certain elements of $\mathbb{F}_{n,k,r}$ are distinct.

\begin{example}\label{example_source}
    Suppose that $n \geq 1$ and $k \geq 2$.
    Let $\mathcal{P}(\mathfrak{C}_{k}^n)$ be the set of subsets of $\mathfrak{C}_{k}^n$.
    We give $\mathcal{P}(\mathfrak{C}_{k}^n)$ the structure of an $n\mathcal{J}_{k}$ algebra as follows:

    \begin{enumerate}
        \item For $i < n$ and $S_0,S_1,\ldots,S_{k-1} \subseteq \mathfrak{C}_{k}^n$, we define
        \[ (S_0,S_1,\ldots,S_{k-1})\lambda_{i}:=\makeset{(x_0,\ldots,x_{n-1})\in \mathfrak{C}_{k}^n}{there is \(l<k\) and \(x_i'\) such that \(x_i=lx_i'\) and \\
    \((x_0,\ldots,x_{i-1},x_i',x_{i+1},x_{n-1})\in S_l\)}.\]
        \item For $i < n$, $j < k$ and $S \subseteq \mathfrak{C}_{k}^n$, we define
        \[ (S)\alpha_{i,j}:=\makeset{(x_0,\ldots,x_{n-1})\in \mathfrak{C}_{k}^n}{\((x_0,\ldots,x_{i-1},jx_i,x_{i+1},x_{n-1})\in S\)}.\]
    \end{enumerate}
    To see that this algebra belongs to $n\mathcal{J}_{k}$, we check that the required laws hold.
    For $S,S_0,S_1,\ldots,S_{k-1} \subseteq \mathfrak{C}_{k}^n$, $i,i'<n$ and $j,j'<k$ with $i\neq i'$, we observe:
    \begin{align*}
        (S\alpha_{i,0},\dots,S\alpha_{i,k-1})\lambda_i&=\makeset{(x_0,\ldots,x_{n-1})\in \mathfrak{C}_{k}^n}{there is \(l<k\) and \(x_i'\) such that \(x_i=lx_i'\) and \\
    \((x_0,\ldots,x_{i-1},x_i',x_{i+1},x_{n-1})\in S\alpha_{i,l}\)}\\
    &=\makeset{(x_0,\ldots,x_{n-1})\in \mathfrak{C}_{k}^n}{there is \(l<k\) and \(x_i'\) such that \(x_i=lx_i'\) and \\
    \((x_0,\ldots,x_{i-1},lx_i',x_{i+1},x_{n-1})\in S\)}=S
    \end{align*}
    \begin{align*}
        (S_0,\dots,S_{k-1})\lambda_i\alpha_{i,j}&=\makeset{(x_0,\ldots,x_{n-1})\in \mathfrak{C}_{k}^n}{\((x_0,\ldots,x_{i-1},jx_ix_{i+1},x_{n-1})\in (S_0,\dots,S_{k-1})\lambda_i\)}\\
  &=\makeset{(x_0,\ldots,x_{n-1})\in \mathfrak{C}_{k}^n}{there is \(l<k\)  and \(x_i'\) such that \(jx_i=lx_i'\)\\ and \((x_0,\ldots,x_{i-1},x_i',x_{i+1},x_{n-1})\in S_l\)}=S_j
    \end{align*}
    \begin{align*}
        (S)\alpha_{i, j}\alpha_{i', j'} &=\makeset{\mathbf{x}\in \mathfrak{C}_{k}^n}{the tuple obtained from \(\mathbf{x}\) by prepending a \(j\) in position \(i\)\\ and a \(j'\) in position \(i'\) belongs to \(S\)}=S\alpha_{i', j'}\alpha_{i, j}.
    \end{align*}
\end{example}

The algebras we use in the main theorem are the free algebras in these varieties. Free algebras can be constructed in any variety, but we give an explicit construction so we can analyze them.

\begin{definition}[Free Algebras]\label{sorting_out_free_algebra}
For each $k \geq 2$, $n,r \geq 1$, let $\mathbb{F}_{n,k,r}$ be the free algebra (in the variety $n\mathcal{J}_{k}$) with free generating set $\{\mathbf{0},\mathbf{1} ,\ldots, \mathbf{r-1}\}$.
That is, if $nA_{k, r}$ is the smallest set of formal strings such that
\begin{enumerate}
    \item each of $\mathbf{0},\mathbf{1} ,\ldots, \mathbf{r-1}$ is an element of $nA_{k, r}$,
    \item if $x_0, x_1, \ldots, x_{k-1} \in nA_{k, r}$ and $d < n$, then the string $\texttt{(}x_0, x_1, \ldots, x_{k-1}\texttt{)}\lambda_d$ belongs to $nA_{k, r}$,
    \item if $x \in nA_{k, r}$, $d < n$ and $j < k$, then the string $\texttt{(}x\texttt{)}\alpha_{d, j}$ belongs to $nA_{k, r}$,
\end{enumerate}
where $\alpha_{d, j}$, $\lambda_d$, commas, and parenthesis are treated as formal symbols for $d < n$ and $j < k$.
Then $\mathbb{F}_{n,k,r}$ is the quotient of $nA_{k,r}$ by the least congruence $=_{\mathbb{F}_{n,k,r}}$ containing the relations defining $n\mathcal{J}_{k}$.
\end{definition}

We now give a pseudo normal form as well as a means for determining whether or not two elements of $nA_{k,r}$ are equal in $\mathbb{F}_{n,k, r}$.

\begin{proposition}\label{free_understanding}
Suppose that $k \geq 2$,  $n,r \geq 1$,  $a,b \in nA_{k,r}$ and $m$ is larger than the number of $\lambda$ symbols in $a$ and $b$. 
Let $C_m$ be the set of all words over the alphabet $\makeset{\alpha_{i,j}}{\(i < n, j < k\)}$ which contain $m$ letters of the form $\alpha_{i,j}$ for all $i < n$.
The following hold:
\begin{enumerate}
    \item For every element of $ \mathbb{F}_{n,k, r}$, there is an element of $nA_{k, r}$ representing it which can be made using a sequence of $\alpha$-type operations applied to various generators, followed by a sequence of $\lambda$-type operations;
    \item For all $\alpha \in  C_m$, the elements $(a)\alpha$ and $(b)\alpha$ of $nA_{k,r}$ are equal in $\mathbb{F}_{n,k,r}$ to elements with no $\lambda$ symbols;
    \item Moreover $a =_{\mathbb{F}_{n,k,r}} b$ if and only if
$(a)\alpha =_{\mathbb{F}_{n,k,r}} (b)\alpha$ for all $\alpha \in  C_m$;
\item If $a$ and $b$ contain no $\lambda$ symbols, then they are each a generator followed by a string in the letters $\makeset{\alpha_{i,j}}{\(i < n, j < k\)}$. In this case, we have $a =_{\mathbb{F}_{n,k,r}} b$ if and only if the corresponding generators are the same and the strings in $\makeset{\alpha_{i,j}}{\(i < n, j < k\)}$ represent the same element of the monoid
\[ \prod_{i < n} \makeset{\alpha_{i,j}}{\(j < k\)}^*. \]
\end{enumerate}

\end{proposition}
\begin{proof}
The first claim is immediate from Definition \ref{def:nVk} ii) and Proposition \ref{prop:nVk1} (2).

We next need to show that for all $\alpha \in  C_m$, the elements $(a)\alpha$ and $(b)\alpha$ of $nA_{k,r}$ are equal in $\mathbb{F}_{n,k,r}$ to elements with no $\lambda$ symbols. This also follows from Definition \ref{def:nVk} ii) and Proposition \ref{prop:nVk1} (2).

We next show that $a =_{\mathbb{F}_{n,k,r}} b$ if and only if
$(a)\alpha =_{\mathbb{F}_{n,k,r}} (b)\alpha$ for all $\alpha \in  C_m$. The forward implication is clear.
The reverse implication follows by repeatedly applying the law from \cref{def:nVk} i) to generate $a$ and $b$ using the elements $(a)\alpha = (b)\alpha$ for $\alpha \in  C_m$.

Definition \ref{def:nVk} iii) (which asserts that $\alpha$ operations commute across dimensions.
It follows that the operations \(\makeset{\alpha_{i,j}}{$i < n, j < k$}\) satisfy the relations of the monoid $ \prod_{i < n} \makeset{\alpha_{i,j}}{$j < k$}^*$.
Thus we need only show that there is an $n\mathcal{J}_{k}$ algebra and elements $S_0,\ldots S_{r-1}$ such that for all $\alpha, \beta \in \makeset{\alpha_{i,j}}{$i < n, j < k$}^*$ and $l,m < r$
we have
\[a_l \alpha \neq a_{m} \beta\]
whenever $a_l \neq a_m$ or $\alpha$ and $\beta$ represent distinct elements of the monoid $ \prod_{i < n} \makeset{\alpha_{i,j}}{$j < k$}^*$.
For each $l < r$, we define
\[S_l:=\makeset{(x_0,x_1,\ldots, x_{n-1})\in \mathfrak{C}_k^n}{for all \(i<n\), \(x_i=v_i10^l1v_i10^{2l}1v_i10^{3l}1\ldots \) for some finite word \(v_i\)}.\]
These elements of the algebra from \cref{example_source} have the required property, as both the number $l$ and each of the components of $\alpha$ are recoverable from the set $S_l \alpha$.
\end{proof}

With the preceding proposition, we now have a (non-unique) form by which we can represent elements of $\mathbb{F}_{n,k,r}$, with `all the lambdas at the end'. Note that the lambdas aren't actually at the end of the string representation, but they are applied last (for example, the format $((\mathbf{0},\mathbf{1})\lambda,(\mathbf{2})\alpha)\lambda$ is allowed).

It is worth keeping in mind that the technique used to check equality of elements of the free $n\mathcal{J}_{k}$ algebras is very similar to the technique we will use to check equality of elements of our monoids when we show finite presentability in \cref{determinewithprjections}.
\section{Constructing the Isomorphism}

Now we can start work on developing the tools with which we can prove \cref{main1}. We start with a natural embedding of $nT_{k,r}$ into our free algebras. The idea is that we map the roots of the shrubberies to the free generators, and the shrubs attached to the roots correspond with applying unary operations in the algebra.

\begin{definition}[Tree Inside the Algebra]\label{tree_map_def}
Suppose that $k \geq 2$, and $n,r \geq 1$. Define a map $\tree_{n,k,r}:nT_{k,r}\to \mathbb{F}_{n,k,r}$ by
\[(\mathbf{m},(s_0, s_1, \ldots, s_{n-1}))\tree_{n,k,r}=(\mathbf{m})\prod_{i=0}^{n-1}\prod_{j=0}^{|s_i|-1}\alpha_{i,(j)s_i}\]
where the product is composition.
Equivalently, let $\phi_{\alpha}: (\{0, 1, \ldots, k-1\}^*)^n \to \makeset{f}{\(f:\mathbb{F}_{n,k,r} \to \mathbb{F}_{n,k,r}\) }$ be the unique semigroup homomorphism such that for all $i<n$ and $j<k$, $\phi_{\alpha}$ sends the shrub with a $j$ in the $i$th coordinate, and the empty word in all other coordinates, to the unary operation $\alpha_{i, j}$.
Then define the map $\tree_{n,k,r}:nT_{k,r}\to \mathbb{F}_{n,k,r}$ by
\[(\mathbf{m},s)\tree_{n,k,r}= (\mathbf{m})((s)\phi_{\alpha}).\]
Moreover, \cref{free_understanding} implies that $\tree_{n,k,r}$ is injective.
\end{definition}

\begin{lemma}\label{lem:FreeGenExpansion}
Suppose that $k \geq 2$, and $n,r \geq 1$.
Suppose further that $M$ is a finite multisubset of $nT_{k, r}$ and $E$ is an elementary expansion of $M$.
The multiset $(M)\tree_{n, k, r}$ is a free generating set for $\mathbb{F}_{n,k, r}$ if and only if the multiset $(E)\tree_{n, k, r}$ is a free generating set for $\mathbb{F}_{n,k, r}$.
(If a multiset contains an element at least twice then we consider the equality of these two copies to be a non-trivial relation, precluding it from being a free generating set.)
\end{lemma}
\begin{proof}
$(\Rightarrow)$ Suppose that $E$ is obtained from $M$ as an elementary expansion about $a \in M$.
Let $i<n$ be the dimension expanded, and let $\left\{ a_l \mid 0 \leq l < k \right\}$ be as in \cref{expansions}. The expansion replaces $(a)\tree_{n,k,r}$ with $k$ new generators $\left\{ (a_l)\tree_{n,k,r} \mid 0 \leq l < k \right\}$ where $(a_l)\tree_{n,k,r} = ((a)\tree_{n,k,r}) \alpha_{i,l}$.

Let $\Gamma$ be an algebra in the variety $n\mathcal{J}_{k}$ and $\phi: (E)\tree_{n, k, r} \to \Gamma$ be a function. We must show that $\phi$ extends uniquely to a homomorphism $\tilde{\phi}: \mathbb{F}_{n,k, r}\to \Gamma$.
We define a map $\phi':(M)\tree_{n, k, r} \to \Gamma$ which agrees with $\phi$ on their common domain $(M \setminus \{a\})\tree_{n,k,r}$ and is defined on $(a)\tree_{n,k,r}$ as:
\[((a)\tree_{n,k,r})\phi' = ((a_0)\phi, \ldots, (a_{k-1})\phi)\lambda_i.\]
Since $(M)\tree_{n, k, r}$ is a free generating set, $\phi'$ extends uniquely to a homomorphism $\tilde{\phi}: \mathbb{F}_{n,k, r}\to \Gamma$.
The second law of Definition~\ref{def:nVk} ensures that for any $l<k$, $((a)\tree_{n,k,r})\tilde{\phi} \alpha_{i,l} = (a_l)\phi$. Thus, $\tilde{\phi}$ is the unique extension of $\phi$.

$(\Leftarrow)$ This follows by a similar argument.
\end{proof}

Note if $M$ contains multiple copies of any of its elements, then it is not a complete prefix code. Similarly if $(M)\tree_{n, k, r}$ contains multiple copies of any of its elements, then it is not a free generating set.
We can use these observations to show that free generating sets in $\mathbb{F}_{n,k,r}$ correspond precisely to complete prefix codes, by expanding to a ``flat" shrubbery.

Certain prefix codes are easier to build and work with than others and this will be important later in the document (in particular in \cref{lem:gens}). As such we introduce these now and observe when they appear.

\begin{definition}[Root Expansions]\label{rootexpansions}
    Suppose that $k \geq 2$, and $n,r \geq 1$. We say that a complete prefix code is a \emph{root expansion} if it can be obtained from \(\{(\mathbf{0},(\varepsilon,\ldots,\varepsilon),(\mathbf{1},(\varepsilon,\ldots,\varepsilon),\ldots,(\mathbf{r-1},(\varepsilon,\ldots,\varepsilon)))\}\) by a sequence of elementary expansions. 
    We refer to a free generating set \(X\subseteq \mathbb{F}_{n,k,r}\) as a \emph{root expansion generating set} if \(X\subseteq \im(\operatorname{tree}_{n,k,r})\) and \((X)\operatorname{tree}_{n,k,r}^{-1}\) is a root expansion.
\end{definition}
In the case that \(n=1\), it is well known that all complete prefix codes are root expansions, however this fails in higher dimensions. Consider for example
\[\{(\mathbf{0},(\varepsilon, 0, 0)),(\mathbf{0},(0, 1, \varepsilon)),(\mathbf{0},(1, \varepsilon, 1)),(\mathbf{0},(0, 0, 1)), (\mathbf{0},(1, 1, 0))\}\]
when \(r=1\), \(k=2\), \(n=3\)
(taken from Remark 10.4 of \cite{elliott2021constructing}).

We now return to our free algebra.
\begin{lemma} \label{AntichainImisGenSet}
Suppose that $k \geq 2$, and $n,r \geq 1$. For any finite $A\subseteq nT_{k,r}$, the set $(A)\tree_{n,k,r}$ is a free generating set for $\mathbb{F}_{n,k,r}$ if and only if $A$ is a complete prefix code.
Moreover in this case $A$ has an expansion to a root expansion and has cardinality in \(r+(k-1)\mathbb{N}\).
\begin{proof}
By repeatedly applying elementary expansions to $A$ and using \cref{lem:FreeGenExpansion}, we can obtain a multiset $M$ of shrubberies such that:
\begin{enumerate}
    \item $M$ is a complete prefix code if and only if $A$ is (\cref{expanding_prefix_codes}),
    \item $(A)\tree_{n,k,r}$ is a free generating set for $\mathbb{F}_{n,k, r}$ if and only if $(M)\tree_{n,k,r}$ is (\cref{lem:FreeGenExpansion}),
    \item there is $N \in \mathbb{N} $ such that for all $s \in M$, $j<n$, $|(j)s^{\shrub}|=N$.
\end{enumerate}
Since all shrubs in $M$ are of the same uniform depth $N$, the set $M$ is a complete prefix code if and only if it is precisely the set of all $n$-shrubberies of depth $N$, each appearing exactly once in \(M\).

By the definitions of the free algebra and $\tree_{n,k,r}$, the multiset $(M)\tree_{n,k,r}$ is a free generating set for $\mathbb{F}_{n,k, r}$ if and only if $M$ contains exactly one element for every shrubbery such that all the entries of its shrub have length $N$ (otherwise it would not generate all such elements or would contain duplicates). 
Hence, $A$ is a complete prefix code if and only if $(A)\tree_{n,k,r}$ is a free generating set.
Note also that in the case that \(M\) is a complete prefix code, it is also a root expansion.

 There are \(r k^{Nn}\) shrubberies all of whose shrubs consist of words of length \(N\). Moreover this set is constructed from \(A\) by elementary expansions which do not change the size of \(A\) modulo \(k-1\).
 Thus \(|A|\in r + (k-1)\mathbb{Z}\).
 A complete prefix code needs at least \(r\) elements to cover \(\mathfrak{C}_{n,k,r}\) so the result follows.
 
\end{proof}
\end{lemma}

We are now ready to define our map and show it is a homomorphism. The map will act on an element of $\operatorname{tot} nM_{k,r}$, represented by a pair $(D,h)$. The image is an endomorphism, which is uniquely defined by how it acts on the free generating set that corresponds with the leaves of $D$.

\begin{definition}\label{phi_def}
Suppose that $k \geq 2$, and $n,r \geq 1$.
Define a map $\phi_{n,k,r}:\operatorname{tot} nM_{k,r}\to \operatorname{End}(\mathbb{F}_{n,k,r})$ as follows.
If $f\in \operatorname{tot} nM_{k,r}$ is represented by a pair $(D,h)$, then for each $d\in \operatorname{Leaves}(D)$ we have
\[((d)\tree_{n,k,r})(f)\phi_{n,k,r}=((d)h)\tree_{n,k,r}.\]
See \cref{phidefined} for proof that this is well-defined.
\end{definition}

\begin{proposition}\label{phidefined}
    Suppose that $k \geq 2$, and $n,r \geq 1$.
    The map $\phi_{n,k,r}:\operatorname{tot} nM_{k,r}\to \operatorname{End}(\mathbb{F}_{n,k,r})$ from \cref{phi_def} is well-defined.
\end{proposition}
\begin{proof}
As $\operatorname{Leaves}(D)$ is a complete prefix code, Lemma \ref{AntichainImisGenSet} implies that $\phi_{n,k,r}$ is well-defined in terms of the representative pair $(D,h)$. We must now show that the choice of $(D, h)$ does not change $\phi_{n,k,r}$.

Let $(D_0,h_0)$ and $(D_1,h_1)$ be two pairs representing the same element $f\in \operatorname{tot} nM_{k,r}$.
Recall \cref{depthdef} and note that $P:=X_r\times (A_k^{|D_0|_{\text{depth}}+ |D_1|_{\text{depth}}})^n$ is a complete prefix code.
In particular $D=\operatorname{pst}(P)$ is the largest pseudotree of depth at most $|D_0|_{\text{depth}}+ |D_1|_{\text{depth}}$ and $D_0\cup D_1 \subseteq D$.
Define $h:\operatorname{Leaves}(D)\to nT_{k,r}$ by:
\[(x)h= \text{ the shrubbery }w\text{ such that }(x\mathfrak{C}_{n, k})f = w\mathfrak{C}_{n, k}.\]
Let $\psi_0$ and $\psi$ be the endomorphisms given by the definition of $(f)\phi_{n,k,r}$ for the two representations $(D_0,h_0)$ and $(D,h)$ respectively.
Since $\operatorname{Leaves}(D)$ is a free generating set for $\mathbb{F}_{n,k,r}$, any endomorphism is determined by its action on these elements. Let $d\in \operatorname{Leaves}(D)$.
It follows that there is a shrub $w$ and $d_0\in \operatorname{Leaves}(D_0)$ such that $d=d_0w$. Since both pairs represent $f$, it follows that $(d)h=(d_0)h_0\cdot w$.
Thus, letting $W$ be the composition of $\alpha$-operations corresponding to $w$, we have:
\begin{align*}
((d)\tree_{n,k,r})\psi&=((d)h)\tree_{n,k,r}\\
    &=((d_0)h_0\cdot w)\tree_{n,k,r}\\
    &=(((d_0)h_0)\tree_{n,k,r})W\\
    &=(((d_0)\tree_{n,k,r})\psi_0)W\\
    &=(((d_0)\tree_{n,k,r})W)\psi_0 & \text{(by Proposition \ref{prop:nVk1}, Eq. \ref{eq2})}\\
    &=((d_0w)\tree_{n,k,r})\psi_0\\
    &=((d)\tree_{n,k,r})\psi_0.
\end{align*}
Thus $\psi_0=\psi$. It follows by symmetry that $\psi_0=\psi=\psi_1$ and hence $\phi_{n,k,r}$ is well defined.
\end{proof}

We now show that this is a homomorphism. We pick representations of our elements $f,g\in \operatorname{tot} nM_{k,r}$ which simplify the representation of $f\circ g$.

\begin{lemma} \label{phihom}
For all $k \geq 2$, and $n,r \geq 1$, the map $\phi_{n,k,r}$ is a homomorphism.
\begin{proof}
Let $f,g\in \operatorname{tot} nM_{k,r}$ be represented by pseudotree pairs $(D_0,h_0)$ and $(D_1,h_1)$ respectively.
By expanding $D_0$ if needed, we can assume without loss of generality that for every $d\in \operatorname{Leaves}(D_0)$, the shrub $(d)h_0$ is a leaf of $D_1$ (i.e., $(d)h_0 \in \operatorname{Leaves}(D_1)$).
Then $f \circ g$ is represented by the pair $(D_0,h_0\circ h_1)$.

Let $d\in \operatorname{Leaves}(D_0)$ be arbitrary. We have:
\begin{align*}
    ((d)\tree_{n,k,r})(f\circ g)\phi_{n,k,r} &= ((d)h_0\circ h_1)\tree_{n,k,r} \\
    &= ( ( (d)h_0 )\tree_{n,k,r} ) (g)\phi_{n,k,r} & \text{(by Def. \ref{phi_def} for $g$ on generator $((d)h_0)\tree_{n,k,r}$)} \\
    &= ( ( (d)\tree_{n,k,r} ) (f)\phi_{n,k,r} ) (g)\phi_{n,k,r} & \text{(by Def. \ref{phi_def} for $f$ on generator $(d)\tree_{n,k,r}$)}
\end{align*}
As $d\in \operatorname{Leaves}(D_0)$ was arbitrary, and $(\operatorname{Leaves}(D_0))\tree_{n,k,r}$ is a free generating set by \cref{AntichainImisGenSet}, it follows that $(f)\phi_{n,k,r} \circ (g)\phi_{n,k,r}=(f\circ g)\phi_{n,k,r}$.
\end{proof}
\end{lemma}

To prove that $\phi_{n,k,r}$ is in fact an isomorphism, we need only show bijectivity. To do this we need a better understanding of how free generating sets can be mapped in \(\mathbb{F}_{n,k,r}\). The following lemma will also be used in later sections.

\begin{lemma}\label{nice_generating_sets}
    Suppose that $k \geq 2$, and $n,r \geq 1$. If $f\in \operatorname{End}(\mathbb{F}_{n,k,r})$, then there are root expansion generating sets $G$ and $G'$ for $\mathbb{F}_{n,k,r}$ such that $G\cup G'\subseteq \im(\tree_{n,k, r})$, and \((G)f\subseteq G'\).
\end{lemma}
\begin{proof}
    Consider the set \(\{(\mathbf{0})f,\ldots, (\mathbf{r-1})f\}\). Each element of this set can be written in the pseudo normal form described in \cref{free_understanding}.
    Let \(N\) be the total number of \(\lambda\) type symbols in all of these pseudo normal forms.
    Using this \(N\), we build the set \(C_N\) as defined in \cref{free_understanding}.
    In particular, for all \(\alpha\in C_N\) and \(\mathbf{i}<r\), we have \((\mathbf{i})f\alpha =(\mathbf{i})\alpha f\) belongs to \(\im(\operatorname{tree}_{n,k,r})\).

    Note also that the set \(G_N:=\makeset{(\mathbf{i})\alpha}{\(\mathbf{i}<r\) and \(\alpha\in C_N\)}\) is a free generating set by iterated application of \cref{lem:FreeGenExpansion}.
    We also have \((G_N)f\cup G_N \subseteq \im(\operatorname{tree}_{n,k,r})\).

    Let \(N'\) be greater than the number of \(\alpha\) symbols needed to generate all elements of \((G_N)f\) from the free generating set \(\{\mathbf{0},\mathbf{1},\ldots, \mathbf{r-1}\}\).  As before, we build the set \(C_{N'}\) and a free generating set \(G_{N'}\).

    Let \(B'\) be the set of cones under the shrubberies in \((G_{N'})\operatorname{tree}_{n,k, r}^{-1}\).
    Similarly, let \(B\) be the set of cones under the shrubberies in \(((G_N)f)\operatorname{tree}_{n,k,r}^{-1}\). 
    By \cref{AntichainImisGenSet}, \(B'\) is a partition of \(\mathfrak{C}_{n,k,r}\).
    By the choice of \(N'\), each element of \(B\) is a subset of the cone under a unique element of \(((G_N)f)\operatorname{tree}_{n,k,r}^{-1}\).
    In particular, \(B'\) has subsets partitioning each element of \(B\) (by the choice of \(N'\)).
    For each \(x\in G_N\), let \(S_x\) be a set of shrubs such that 
    \[\makeset{b\in B'}{\(b\subseteq ((x)f)\operatorname{tree}_{n,k,r}^{-1} \mathfrak{C}_{n,k}\)}=\makeset{((x)f)\operatorname{tree}_{n,k,r}^{-1}\cdot s \mathfrak{C}_{n,k}}{\(s\in S_x\)}.\]
    As \(B'\) contains a partition of \(((x)f)\operatorname{tree}_{n,k,r}^{-1} \mathfrak{C}_{n,k}\), it follows that
    \[\makeset{s \mathfrak{C}_{n,k}}{\(s\in S_x\)}\]
    is a partition of \(\mathfrak{C}_{n,k}\) for all \(x\in G_{N'}\). So \(\{(\mathbf{0},s)|s\in S_x\}\) is a complete prefix code for \(\mathfrak{C}_{n,k,1}\). 
    From \cref{AntichainImisGenSet}, it follows that \(\makeset{(\mathbf{0},s)\operatorname{tree}_{n,k,1}}{$s\in S_x$}\) is a free generating set for \(\mathbb{F}_{n,k,1}\) for each \(x\).
    Recall the map \(\phi_\alpha\) from \cref{tree_map_def}.
    
    As \(\makeset{(\mathbf{0},s)\operatorname{tree}_{n,k,1}}{$s\in S_x$}\) is a free generating set for \(\mathbb{F}_{n,k,1}\), it follows that the set obtained from \(G_N\) by replacing \(x\) with \(\{(x)(s)\phi_\alpha|s\in S_x\}\) is also a free generating set.
    Perform this replacement for all \(x\in G_{N}\) to obtain a new free generating set \(G''\).
    By construction, we now have \((G'')f\subseteq G_{N'}\).

    Also by construction, the generating set \(G_{N'}\) is a root expansion. By \cref{AntichainImisGenSet}, \((G'')\operatorname{tree}_{n,k,r}^{-1}\) has an expansion which is a root expansion. So by a sequence of replacing elements \(x\in G''\) with \(\{x\alpha_{i,0},\ldots, x\alpha_{i,k-1}\}\) for various \(i\), we can transform \(G''\) into a root expansion (we do not do this yet). Let \(N''\in\N\) be larger than the number of steps in this sequence. 
 
    Perform \(N''\) expansions to \((G'')\operatorname{tree}_{n,k,r}^{-1}\) in every dimension, so the new free generating set corresponding to \(G''\) is \(G:=\makeset{g\alpha}{$g\in G'', \alpha\in C_{N''}$}\). By the choice of \(N''\), \((G)\operatorname{tree}_{n,k,r}^{-1}\) is an expansion of a root expansion and is hence a root expansion.
    Moreover, as \(f\) is a homomorphism, we have 
    \[\makeset{g\alpha}{$ g\in G'', \alpha\in C_{N''}$}f=\makeset{(g)f\alpha}{$ g\in G'',\alpha\in C_{N''}$}\subseteq \makeset{g\alpha}{$g\in G_{N'},\alpha\in C_{N''}$}.\]
    The result now follows using the root expansion generating sets \(G\) and \(G':=\makeset{g\alpha}{$g\in G_N', \alpha\in C_{N''}$}\).

\end{proof}

We can now show that \(\phi_{n,k,r}\) is a bijection.

\begin{proposition}\label{psidefined}
    Suppose that $k \geq 2$, and $n,r \geq 1$.
    The map $\phi_{n,k,r}$ is an isomorphism and so $ \operatorname{tot} nM_{k,r}\cong \operatorname{End}(\mathbb{F}_{n,k,r})$.
\end{proposition}
\begin{proof}
\cref{phihom} establishes that $\phi_{n,k,r}$ is a homomorphism. We next show that \(\phi_{n,k,r}\) is surjective.

Let \(f\in \operatorname{End}(\mathbb{F}_{n,k,r})\). By \cref{nice_generating_sets}, let \(G\) be a free generating set for \(f\) such that \(G\cup (G)f\subseteq \im(\operatorname{tree}_{n,k,r})\).
From \cref{AntichainImisGenSet}, it follows that \((G)\operatorname{tree}_{n,k,r}^{-1}\) is a complete prefix code.
Let \(D:=\pst((G)\operatorname{tree}_{n,k,r}^{-1})\) and let \(h:\operatorname{Leaves}(D)\to \mathbb{F}_{n,k,r}\) be defined by
\[((l)h)\operatorname{tree}_{n,k,r}=((l)\operatorname{tree}_{n,k,r})f.\]
It follows from the definition of \(\phi_{n,k,r}\) that \(f\) is the image under \(\phi_{n,k,r}\) of the element of \(\operatorname{tot} nM_{k,r}\) defined by \((D,h)\).

It remains only to show that \(\phi_{n,k,r}\) is injective. Suppose now that \(f,g\in \operatorname{tot} nM_{k,r}\) and \((f)\phi_{n,k,r}=(g)\phi_{n,k,r}\). We show that \(f=g\).
Let \(D\) be a large enough pseudotree that there are \(\tilde{f},\tilde{g}: \operatorname{Leaves}(D)\to nT_{k,r}\) with \((D,\tilde{f})\) and \((D,\tilde{g})\) defining \(f\) and \(g\) respectively. From the definition of \(\phi_{n,k,r}\) it follows that
for all \(d\in \operatorname{Leaves}(D)\), we have
\[((d)\tree_{n,k,r})(f)\phi_{n,k,r}=((d)\tilde{f})\tree_{n,k,r}\]
and 
\[((d)\tree_{n,k,r})(g)\phi_{n,k,r}=((d)\tilde{g})\tree_{n,k,r}.\]
Let \(h:=(f)\phi_{n,k,r} =(g)\phi_{n,k,r}\). For all \(d\in \operatorname{Leaves}(D)\), we now have
\[(d)\tilde{g}=((d)\tree_{n,k,r})h\tree_{n,k,r}^{-1}=(d)\tilde{f}.\]
Thus \(\tilde{g}=\tilde{f}\). It follows that \(f=g\) as required.

\end{proof}

\section{The Rewrite Perspective}

We now show that the monoids $\operatorname{tot} nM_{k,r}$ are finitely presented. We will use a method similar to Thompson's original proof of finite presentability for his monoid $\operatorname{tot} 1M_{2,1}$, which involves converting to and working with an anti-isomorphic monoid, which uses labelled free generating sets. An element of Thompson's group $V$ (the group of units of $\operatorname{tot} 1M_{2,1}$) can be thought of as a rewriting rule between bracketed expressions. For example, the prefix exchange map $0\mapsto 0$, $10\mapsto 11$, $11\mapsto 10$ is the rule $a(bc)\to a(cb)$.
These labelled free generating sets will be the analogue of bracketed expressions in this framework; they can also be thought of as matrices as we see later.

\begin{definition}[Labelled Free Generating Sets]
Suppose that $k\geq 2$, and $n,r\geq 1$.
Let $\mathcal{L}_{n,k, r}$ be the set of labeled root expansion generating sets for $\mathbb{F}_{n,k, r}$ with labels from a fixed infinite set. Formally, we view an element $L \in \mathcal{L}_{n,k, r}$ as a function $L: G \to \Lambda$, where $G$ is a free generating set for $\mathbb{F}_{n,k, r}$ and $\Lambda$ is the set of labels.
Let $\mathcal{L}_{n,k,r}^I\subseteq \mathcal{L}_{n,k,r}$ be the subset of labelled free generating sets such that different generators have different labels (i.e., the labeling function is injective).
\end{definition}

For example, when $n=1$, $k=2$, and $r=1$, the labelled generating set $\mathbf{0}\alpha_0\mapsto a$, $\mathbf{0}\alpha_1\alpha_0\mapsto b$, $\mathbf{0}\alpha_1\alpha_1\mapsto c$ represents the expression $a(bc)$.

For each $f\in \operatorname{End}(\mathbb{F}_{n,k, r}) \cong \operatorname{tot} nM_{k, r}$, let $(f)\sigma_{n, k, r}$ denote the binary relation on $\mathcal{L}_{n,k,r}$ defined by
\[(f)\sigma_{n, k, r}:= \makeset{(L_1, L_2)\in \mathcal{L}_{n,k,r}\times \mathcal{L}_{n,k,r}}{\(\text{for each } g \in \operatorname{dom}(L_2) \text{, we have } (g)f \in \operatorname{dom}(L_1)\) \\
\(\text{and } (g)f \text{ has the same label as } g \text{ under } L_1 \text{ and } L_2.\)}\]
Some readers may prefer to think of the condition defining $(f)\sigma_{n, k, r}$ as the equivalent claim that the map $f$ is an extension of a function from $\operatorname{dom}(L_2)$ to $\operatorname{dom}(L_1)$ that preserves labels.
Informally, $(f)\sigma_{n, k, r}$ tells us how $f$ is meant to ``rewrite" the ``expressions" that are our labelled generating sets. We see next that these rewrite rules characterise our elements.

\begin{lemma}\label{lem:very_injective_term_action}
Suppose that $k\geq 2$, and $n,r\geq 1$.
If $(L_1,L_2)\in \mathcal{L}_{n,k,r}^I\times \mathcal{L}_{n,k,r}$ and $\im(L_2) \subseteq \im(L_1)$, then there is a unique $f\in \operatorname{End}(\mathbb{F}_{n,k, r})$ such that $(L_1, L_2)\in (f)\sigma_{n, k, r}$.
In particular, if $f, g\in \operatorname{End}(\mathbb{F}_{n,k, r})$ and such a pair $(L_1, L_2)$ is contained in $(f)\sigma_{n, k, r}\cap (g)\sigma_{n, k, r}$, then $f=g$.
\end{lemma}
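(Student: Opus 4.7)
The plan is to unpack the relational meaning of the condition ``$fL_1\supseteq L_2$'' and then invoke the universal property of the free algebra $\mb{F}_{n,k,r}$, with the injectivity of $L_1$ being precisely what turns a covering condition into a genuine set function on generators. Viewing $L_i$ as a subset of $\dom(L_i)\times\N$, the relation product $fL_1$ is the partial map from $\mb{F}_{n,k,r}$ to $\N$ sending $x\mapsto((x)f)L_1$ whenever $(x)f\in\dom(L_1)$; so $(L_1,L_2)\in (f)\sigma_{n,k,r}$ says exactly that for every $x\in\dom(L_2)$ one has $(x)f\in\dom(L_1)$ and $((x)f)L_1=(x)L_2$.

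First I would establish existence. For each $x\in\dom(L_2)$, the hypothesis $\im(L_2)\subseteq\im(L_1)$ provides some element of $\dom(L_1)$ with $L_1$-label equal to $(x)L_2$, and injectivity of $L_1$ makes that element unique; call it $\bar{x}\in\mb{F}_{n,k,r}$. The assignment $x\mapsto\bar{x}$ is a well-defined set map $\dom(L_2)\to\mb{F}_{n,k,r}$. Since $\dom(L_2)$ is a free generating set, the universal property of $\mb{F}_{n,k,r}$ extends this map uniquely to an endomorphism $f\in\End(\mb{F}_{n,k,r})$. By construction $(L_1,L_2)\in(f)\sigma_{n,k,r}$.

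Next I would dispatch uniqueness, which at the same time yields the ``In particular'' clause. Suppose $f,g\in\End(\mb{F}_{n,k,r})$ both satisfy $(L_1,L_2)\in(f)\sigma_{n,k,r}\cap(g)\sigma_{n,k,r}$. Then for every $x\in\dom(L_2)$, both $(x)f$ and $(x)g$ lie in $\dom(L_1)$ and are assigned the common label $(x)L_2$ by $L_1$, so injectivity of $L_1$ forces $(x)f=(x)g$. Thus $f$ and $g$ agree on the free generating set $\dom(L_2)$, and hence on all of $\mb{F}_{n,k,r}$.

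No real obstacle is anticipated: the lemma is essentially a clean repackaging of the universal property, and the only point that could trip one up is keeping the relational reading of $fL_1\supseteq L_2$ straight. Once that is done, injectivity of $L_1$ does the work of selecting an unambiguous target for each generator, and freeness of $\dom(L_2)$ produces both the endomorphism and its uniqueness at one stroke.
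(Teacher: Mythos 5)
Your proposal is correct and follows essentially the same route as the paper: your map $x\mapsto\bar{x}$ is exactly the paper's $f'=L_2L_1^{-1}$, well-defined by injectivity of $L_1$ and the containment $\im(L_2)\subseteq\im(L_1)$, and both arguments then conclude by the universal property of the free generating set $\dom(L_2)$ for existence and uniqueness alike. No issues.
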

\begin{proof}
Since $L_1$ is injectively labeled (from $\mathcal{L}_{n,k,r}^I$) and $\im(L_2) \subseteq \im(L_1)$, there is a unique function $f':\operatorname{dom}(L_2)\to \operatorname{dom}(L_1)$ defined by the label preservation condition: $(g)f'$ is the element of $\operatorname{dom}(L_1)$ with the same label as $g \in \operatorname{dom}(L_2)$.
A map $f\in \operatorname{End}(\mathbb{F}_{n,k, r})$ satisfies $(L_1, L_2)\in (f)\sigma_{n, k, r}$ if and only if $f$ restricts to $f'$ on the free generating set $\operatorname{dom}(L_2)$.
As the domain of $L_2$ is a free generating set for $\mathbb{F}_{n,k, r}$, the universal property of free algebras guarantees a unique such endomorphism $f$.
\end{proof}

Among other things, this lemma allows us to define the binary operation for our ``rewrite rules" similar to that for $V$.

\begin{definition}\label{rewritedef}
Suppose that $k\geq 2$, and $n,r\geq 1$.
We define $n\operatorname{Rel}_{k,r}$ to be the monoid whose underlying set is the image of the map $\sigma_{n, k, r}$, i.e., $n\operatorname{Rel}_{k,r} = (\operatorname{End}(\mathbb{F}_{n, k,r}))\sigma_{n, k, r}$.
The multiplication, denoted $\cdot$, is defined such that for $r_1, r_2 \in n\operatorname{Rel}_{k,r}$, their product $r_1 \cdot r_2$ is the unique element of $n\operatorname{Rel}_{k,r}$ which contains the relation composition $r_1 \circ r_2$. That is, if \(r_1\) rewrites an expression and \(r_2\) rewrites the result, then \(r_1\cdot r_2\) does both rewrites.
We show this is well defined in \cref{rewritedefined}.
\end{definition}

\begin{proposition}\label{rewritedefined}
For all $k\geq 2$, and $n,r\geq 1$, the monoid $n\operatorname{Rel}_{k,r}$ from \cref{rewritedef} is well-defined and the map $\sigma_{n,k,r}:\operatorname{End}(\mathbb{F}_{n, k,r}) \to n\operatorname{Rel}_{k,r}$ is an anti-isomorphism.
\end{proposition}
\begin{proof}
The underlying set of \(n\operatorname{Rel}_{k,r}\) is certainly well-defined, and by definition \(\sigma_{n,k,r}:\End(\mathbb{F}_{n, k,r}) \to n\operatorname{Rel}_{k,r}\) is surjective.

It remains to show that this map is injective, the multiplication as described is well-defined, and it reverses the order of  multiplication in \(\End(\mathbb{F}_{n, k,r})\).
If \(f, g\in \End(\mb{F}_{n, k, r})\) and 
\((L_1, L_2)\in (f)\sigma_{n, k, r} \circ (g)\sigma_{n, k, r}\), then there is \(L_3\in \mc{L}_{n,k,r}\) such that \(f\circ L_1\) extends \(L_3\) and \(g\circ L_3\) extends \(L_2\).
Hence \( gf\circ L_1\) extends \(L_2\) and \((L_1, L_2)\in (gf)\sigma_{n, k, r}\). So \((f)\sigma_{n, k, r} \circ (g)\sigma_{n, k, r}\subseteq (gf)\sigma_{n, k, r}\). 

We have shown that for all \(f, g\in \End(\mb{F}_{n, k,r})\), we have 
\[(fg)\sigma_{n, k, r} \supseteq (g)\sigma_{n, k, r} \circ (f)\sigma_{n, k, r}.\]

By \cref{nice_generating_sets}, \((g)\sigma_{n, k, r}\circ (f)\sigma_{n, k, r}\) contains a pair \((L_1,L_2)\) whose first element is from \(\mc{L}_{n,k,r}^I\). Moreover, by Lemma~\ref{lem:very_injective_term_action}, \(fg\) is the only element whose relation contains it.     
\end{proof}

We provide a two dimensional example of working with the rewrite perspective.
\begin{example} \label{example:bakers}
Let $f$ be the element of $2V$ (the group of units of $2M_{2,1}$) which maps the complete prefix code $\{(\mathbf{0},(\varepsilon, 0)), (\mathbf{0},(\varepsilon,1))\}$ to $\{(\mathbf{0},(0, \varepsilon)), (\mathbf{0},(1,\varepsilon))\}$ in this order.
Also let $g$ be the element in $2M_{2, 1}$ which sends $(\mathbf{0},(\varepsilon,\varepsilon))$ to $(\mathbf{0},(0, \varepsilon))$.
Then the corresponding endomorphisms in $\operatorname{End}(\mathbb{F}_{2, 2, 1})$ are defined by the action on the root $\mathbf{0}$:
\[(\mathbf{0})(f)\phi_{n, k, r} = ((\mathbf{0})\alpha_{0, 0}, (\mathbf{0})\alpha_{0,1})\lambda_1, \quad \text{and} \quad (\mathbf{0})(g)\phi_{n, k, r} = (\mathbf{0}) \alpha_{0, 0}.\]
Hence if $L_0, L_1\in \mathcal{L}$ are defined by
\[L_0=\{((\mathbf{0})\alpha_{0, 0}, a), ((\mathbf{0})\alpha_{0, 1}, b)\} \quad \operatorname{ and }\quad L_1=\{((\mathbf{0})\alpha_{1, 0}, a), ((\mathbf{0})\alpha_{1, 1}, b)\}\]
then $(L_0, L_1)\in (f)\phi_{n,k,r}\sigma_{n,k,r}$.

For $r\in n\operatorname{Rel}_{k,r}$, the notation $A\xrightarrow[]{r} B$ means that $A, B \in \mathcal{L}_{n,k,r}$ with $(A, B)\in r$.
We extend this notation to $f\in \operatorname{End}(\mathbb{F}_{n, k, r}) \cup \operatorname{tot}nM_{k, r}$ in the natural fashion via the maps $\phi_{n, k, r}$ and $\sigma_{n, k, r}$ (so that we may simply write $A\xrightarrow[]{f} B$).
Hence the elements of $n\operatorname{Rel}_{k,r}$ corresponding to $f, g$ above can be described as follows.
\[\{((\mathbf{0})\alpha_{0, 0}, a),((\mathbf{0})\alpha_{0, 1}, b)\} \xrightarrow[]{f} \{((\mathbf{0})\alpha_{1, 0}, a),((\mathbf{0})\alpha_{1, 1}, b)\}\]
\[\{((\mathbf{0})\alpha_{0, 0}, a),((\mathbf{0})\alpha_{0, 1}, b)\} \xrightarrow[]{g} \{(\mathbf{0}, a)\} \]

Thus their composite $(fg)\phi_{n,k,r}\sigma_{n,k,r}$ (which corresponds to $g \circ f$ in $\operatorname{End}$) can be described by the composition of relations $f \circ g$:
\begin{align*}
    & \{((\mathbf{0})\alpha_{0, 0}\alpha_{0, 0}, a),
((\mathbf{0})\alpha_{0, 0}\alpha_{0, 1}, b), ((\mathbf{0})\alpha_{0, 1}\alpha_{0, 0}, c), ((\mathbf{0})\alpha_{0, 1}\alpha_{0, 1}, d)\}\\
&\xrightarrow[]{f} \{((\mathbf{0})\alpha_{1, 0}\alpha_{0, 0}, a),
((\mathbf{0})\alpha_{1, 0}\alpha_{0, 1}, b), ((\mathbf{0})\alpha_{1, 1}\alpha_{0, 0}, c), ((\mathbf{0})\alpha_{1, 1}\alpha_{0, 1}, d)\}\\
&\xrightarrow[]{g} \{((\mathbf{0})\alpha_{1, 0}, a), ((\mathbf{0})\alpha_{1, 1}, c)\}.
\end{align*}
The final result is:
\[\{((\mathbf{0})\alpha_{0, 0}\alpha_{0, 0}, a),
((\mathbf{0})\alpha_{0, 0}\alpha_{0, 1}, b), ((\mathbf{0})\alpha_{0, 1}\alpha_{0, 0}, c), ((\mathbf{0})\alpha_{0, 1}\alpha_{0, 1}, d)\} \xrightarrow[]{fg} \{((\mathbf{0})\alpha_{1, 0}, a), ((\mathbf{0})\alpha_{1, 1}, c)\}.\]
Thinking of dimension $0$ as vertical (going down) and dimension $1$ as horizontal (going left to right), these equations can alternatively be represented more succinctly by matrix-like notation:
\[\left(\begin{bmatrix}
a & b
\end{bmatrix}\right) \xrightarrow[]{f} \left(\begin{bmatrix}
a \\
b
\end{bmatrix}\right), \quad \left(\begin{bmatrix}
a & b
\end{bmatrix}\right) \xrightarrow[]{g}
\left(a\right)
 ,\]
\[\left(\begin{bmatrix}\begin{bmatrix}
a & b\end{bmatrix}&
\begin{bmatrix}
c & d
\end{bmatrix}\end{bmatrix}\right) \xrightarrow[]{f} \left(\begin{bmatrix}
a & b\\
c & d
\end{bmatrix}\right)\xrightarrow[]{g} \left(\begin{bmatrix}
a \\
c
\end{bmatrix}\right) ,\]
\[\left(\begin{bmatrix}\begin{bmatrix}
a & b\end{bmatrix}&
\begin{bmatrix}
c & d
\end{bmatrix}\end{bmatrix}\right) \xrightarrow[]{fg}  \left(\begin{bmatrix}
a \\
c
\end{bmatrix}\right) .\]
The curved brackets $\left(\ldots\right)$ allow for introducing multiple matrices in the case that $r>1$. For example with $r=n=3$ and $k=2$ we can denote
\[\{ ((\mathbf{0})\alpha_{0,1},a), ((\mathbf{1})\alpha_{0,1},b), (\mathbf{1},c), (\mathbf{2},d) \} \xrightarrow[]{h} \{ (\mathbf{0},a), (\mathbf{1},d), ((\mathbf{2})\alpha_{1,1},b), ((\mathbf{2})\alpha_{1,0},c) \}\]
as
\[\left(\begin{bmatrix}
a \\
b
\end{bmatrix},c,d \right)\xrightarrow[]{h}\left(a,d,\begin{bmatrix}
c & b
\end{bmatrix}\right).\]
The matrix notation here is significantly easier to read than fully written out free generating sets, and we will use this notation going forward where possible. We restrict our visual examples to the first two dimensions, $0$ and $1$, as representing higher dimensional expansions visually is challenging.
\end{example}

In the following definitions we define how we can ``defer" the action of a continuous map or rewrite rule $f$. Informally this works by making the map treat each shrubbery as it previously treated the corresponding root.

\begin{definition}[Root Systems and Deferments]\label{defermentdef}
Suppose that $n,r\geq 1$ and $k\geq 2$.
A \textit{root system} $s=(s_0, s_1, \ldots, s_{r-1})$ is an element of $(nT_{k,r})^r$ such that the cones of the entries of $s$ are pairwise disjoint. 
For each $f\in \operatorname{tot} nM_{k, r}$, let $f_{s} \in \operatorname{tot} nM_{k,r}$ be the element defined by
\[(s_ix)f_s=s_{((\mathbf{i},x)f)^{\rt}}\cdot ((\mathbf{i},x)f)^{\shrub} \quad\text{ and }\quad (y)f_s=y\]
for all $i<r$, $y\in \mathfrak{C}_{n,k, r}\backslash \bigcup_{i<r}w_i\mathfrak{C}_{n,k}$ and $x\in \mathfrak{C}_{n,k}$.
For $f\in n\operatorname{Rel}_{k,r}$, we define $f_{s}\in n\operatorname{Rel}_{k,r}$ analogously via the anti-isomorphism $\sigma_{n,k,r}$.
We call such elements the \textit{deferments} of $f$.
\end{definition}
\begin{example}\label{numberlabelexample}
If $f$ is as in \cref{example:bakers} (recall that the choice of label set is unimportant) with
\[\left(\begin{bmatrix}
0 & 1
\end{bmatrix}\right) \xrightarrow[]{f} \left(\begin{bmatrix}
0 \\
1
\end{bmatrix}\right)\]
and $s=(\mathbf{0},(0, 01))$ then
\[\left(\begin{bmatrix}
\begin{bmatrix} 0\\  \begin{bmatrix} 1&  2\end{bmatrix}\end{bmatrix}& 3\\
4 & 5
\end{bmatrix}\right) \xrightarrow[]{f_{s}} \left(\begin{bmatrix}
\begin{bmatrix} 0\\  \begin{bmatrix} 1\\  2\end{bmatrix}\end{bmatrix}& 3\\
4 & 5
\end{bmatrix}\right).\]
\end{example}

\section{Finite Presentability}

\subsection{Proving presentability}
The preliminary setup is now complete, and we can begin working towards a proof of finite presentability in earnest, beginning with finite generation.
\begin{lemma}\label{lem:gens}
Let \(n,r\geq 1\) and \(k\geq 2\). The group of units of the monoid \(n\operatorname{Rel}_{k,r}\) is finitely generated. 
Moreover, the monoid \(n\operatorname{Rel}_{k,r}\) is generated by its group of units together with the elements \(U\), \(\pi^{(\mathbf{0},(0,\varepsilon,\ldots))}\) defined by 
\[(0, 1,\ldots,r-1) \xrightarrow[]{U} \left(\begin{bmatrix}
0\\
\vdots\\
0
\end{bmatrix}, 1,2,\ldots,r-1\right),\]

\[ \left(\begin{bmatrix}
0\\
\vdots\\
k-1
\end{bmatrix}, k, k+1,\ldots, k+r-2\right) \xrightarrow[]{\pi^{(\mathbf{0},(0,\varepsilon,\ldots))}} (0, k, k+1,\ldots, k+r-2)\]
where we are using the notation of \cref{example:bakers} and \cref{numberlabelexample}.
\end{lemma}

\begin{proof}
The proof is structured as follows. We first show that the group \(\operatorname{Aut}(\mathbb{F}_{n,k,r})\) is finitely generated.  \cref{rewritedefined} then implies the first part of the lemma. We then conclude the proof working in \(n\operatorname{Rel}_{k,r}\).

Note that by \cref{lem:FreeGenExpansion}, we have \(\mathbb{F}_{n,k,r}\cong \mathbb{F}_{n,k,r+k-1}\) so when proving that \(\operatorname{Aut}(\mathbb{F}_{n,k,r})\) is finitely generated we can assume without loss of generality that \(r>1\).
\cref{nice_generating_sets} implies that each element of  \(\operatorname{Aut}(\mathbb{F}_{n,k,r})\) can be defined by mapping a root expansion generating set bijectively to another root expansion generating set.
By \cref{AntichainImisGenSet}, each free generating set for \(\mathbb{F}_{n,k,r}\) has cardinality in \(r+(k-1)\N\). Moreover, for all \(m\in \N\), the set
\[\{\boldsymbol{0},\boldsymbol{1},\ldots,\mathbf{r-2}\}\cup \left(\bigcup_{i<m}\{(\mathbf{r-1})\alpha_{0,k-1}^i\alpha_{0,0},\ldots, (\mathbf{r-1})\alpha_{0,k-1}^i\alpha_{0,k-2}\}\right)\cup \{(\mathbf{r-1})\alpha_{0,k-1}^{m}\}\]
is a root expansion generating set with cardinality \(r+(k-1)m\). 
We call these ``right-heavy" free generating sets.
Thus to generate \(\operatorname{Aut}(\mathbb{F}_{n,k,r})\), it suffices to be able to permute the elements of right-heavy free generating sets and map every root expansion generating set to one of these.

Let \(G\) denote the set of root expansion generating sets which require at most two elementary expansions (as in \cref{rootexpansions}). In particular \(G\) is finite. Let \(\Sigma\) be the set of all elements of \(\operatorname{Aut}(\mathbb{F}_{n,k,r})\) which define a bijection from one element of \(G\) to another.

We now give an algorithm which uses elements of \(G\) to convert a root expansion generating set \(X\) into a right-heavy free generating set.

\begin{enumerate}
    \item[$\substack{\text{Step }\mathbf{i}\\\text{ for }\mathbf{i}< r}$] If \(\mathbf{i}\notin X\), then there is \(l\) such that in the construction of \(X\), we perform an expansion about \(\mathbf{i}\) in dimension \(l\).
    Choose \(\mathbf{j}<r\) distinct from \(\mathbf{i}\) (recall that we assumed \(r>1\)).
    Let \(g\in G\) be an element inducing a bijection between the free generating sets \[(\{\mathbf{0},\ldots,\mathbf{r-1}\}\backslash\{\mathbf{i}\})\cup \{\mathbf{i}\alpha_{l,0},\ldots, \mathbf{i}\alpha_{l,k-1}\},\] 
    \[(\{\mathbf{0},\ldots,\mathbf{r-1}\}\backslash\{\mathbf{j}\})\cup \{\mathbf{j}\alpha_{0,0},\ldots, \mathbf{j}\alpha_{0,k-1}\}.\] 
    By the choice of \(l\), it follows that \((X)g\) is a root expansion generating set as well. Replace \(X\) with \((X)g\). 
    The number of \(\alpha\)-type operations needed to construct the elements written with the free generator \(\mathbf{i}\) has decreased. Moreover if \(l\neq 0\), then the number of \(\alpha_{l,m}\) appearing the construction of \(X\) for all \(m<k\) has decreased as well.
    Repeat this step until \(\mathbf{i}\in X\).
    \item[Step r] Observe that \(\mathbf{r-1}\in X\) and \(X\) is a root expansion generating set which only requires expansions in dimension \(0\).
    If there is some \(\mathbf{i}<\mathbf{r-1}\) with \(\mathbf{i}\notin X\), then choose one of them and let \(g\in G\) be an element inducing a bijection between the free generating sets \[(\{\mathbf{0},\ldots,\mathbf{r-1}\}\backslash\{\mathbf{i}\})\cup \{\mathbf{i}\alpha_{0,0},\ldots, \mathbf{i}\alpha_{0,k-1}\},\] 
    \[\{\mathbf{0},\ldots,\mathbf{r-2}\}\cup \{(\mathbf{r-1})\alpha_{0,0},\ldots, (\mathbf{r-1})\alpha_{0,k-1}\}.\] 
     Replace \(X\) with \((X)g\). If \(\mathbf{i}\) does not exist then \(X=\{\mathbf{0},\ldots,\mathbf{r-1}\}\) so we are done.
    \item[Step r+1] At this point \(X\) is a root expansion generating set which requires only expansions in dimension \(0\) and \(\{(\mathbf{r-1})\alpha_{0,0},\ldots, (\mathbf{r-1})\alpha_{0,k-1}\} \subseteq X\).
    While there is some \(\mathbf{i}<\mathbf{r-1}\) with \(\mathbf{i}\notin X\), choose one of them and let \(g\in G\) be an element inducing a bijection between the free generating sets 
    \[(\{\mathbf{0},\ldots,\mathbf{r-2}\}\backslash\{\mathbf{i}\})\cup \{\mathbf{i}\alpha_{0,0},\ldots, \mathbf{i}\alpha_{0,k-1}\} \cup \{(\mathbf{r-1})\alpha_{0,0},\ldots, (\mathbf{r-1})\alpha_{0,k-1}\},\] 
    \[\{\mathbf{0},\ldots,\mathbf{r-2}\}\cup \{(\mathbf{r-1})\alpha_{0,0},\ldots, (\mathbf{r-1})\alpha_{0,k-2}\}\cup \{(\mathbf{r-1})\alpha_{0,k-1}\alpha_{0,0},\ldots, (\mathbf{r-1})\alpha_{0,k-1}\alpha_{0,k-1}\}\] 
     where we specifically map \((\mathbf{r-1})\alpha_{0,k-1}\) to \((\mathbf{r-1})\alpha_{0,k-1}\alpha_{0,k-1}\).
     Note that applying \(g\) to \(X\) will always leave the elements of \(X\) using \(\mathbf{r-1}\) in the form required by a right-heavy generating set and decreases the number of elements of \(X\) not in the required form.
     Replace \(X\) with \((X)g\).
     When such an  \(\mathbf{i}\) no longer exists, we must have that \(X\) is right-heavy.
\end{enumerate}

We now know that the group generated by \(G\) can transform any root expansion generating set to a right-heavy generating set. The elements of \(G\) can also, by definition, permute the generating set
\[\{\boldsymbol{0},\boldsymbol{1},\ldots,\mathbf{r-2}\}\cup\{(\mathbf{r-1})\alpha_{0,0},\ldots, (\mathbf{r-1})\alpha_{0,k-2}\} \cup \{(\mathbf{r-1})\alpha_{0,k-1}\alpha_{0,0},\ldots,(\mathbf{r-1})\alpha_{0,k-1}\alpha_{0,k-1}\}\]
in all possible ways.
As we can also use \(G\) to map \(\mathbf{r-1}\) to \((\mathbf{r-1})\alpha_{0,k-1}\), it follows by conjugation that there are elements of \(\langle G\rangle\) which permute the set
\[\{(\mathbf{r-1})\alpha_{0,k-1}^m\alpha_{0,0},\ldots, (\mathbf{r-1})\alpha_{0,k-1}^m\alpha_{0,k-2}\}\cup\{(\mathbf{r-1})\alpha_{0,k-1}^{m+1}\alpha_{0,0},\ldots,(\mathbf{r-1})\alpha_{0,k-1}^{m+1}\alpha_{0,k-1}\}\]
for all \(m\geq 0\) in all possible ways while fixing the other elements of the corresponding right heavy generating set. Together these allow us to generate all permutations of all right-heavy generating sets. This concludes the proof that \(\operatorname{Aut}(\mathbb{F}_{n,k,r})\) is finitely generated.

We now show that \(n\operatorname{Rel}_{k,r}\) is generated by its group of units together with the elements \(U\) and \(\pi^{(\mathbf{0},(0,\varepsilon,\ldots))}\).
Suppose that \(f\in n\operatorname{Rel}_{k,r}\) is defined by the rewrite rule \(L_0 \xrightarrow[]{f} L_1\). Let \(g_0\) and \(g_1\) be invertable elements such that there are labelled right-heavy generating sets \(L_0'\) and \(L_1'\) with 
\[L_0'\xrightarrow[]{g_0} L_0 \xrightarrow[]{f} L_1\xrightarrow[]{g_1} L_1'.\]
Using invertible elements, we can permute the labels of free generating sets however we wish. Moreover by applying \(U\) we can create more copies of the label of \(\mathbf{0}\) from a given free generating set. Thus there is \(U'\) in the monoid generated by \(U\) and the units of  \(n\operatorname{Rel}_{k,r}\) such that \(L_0' \xrightarrow[]{U'} L_0''\), \(L_0''\) is right-heavy and all labels of \(L_1'\) appear more often in \(L_0''\).

Recall that by \cref{AntichainImisGenSet}, the multiset of labels of \(L_0''\) and  \(L_1'\) have sizes in \(r+(k-1)\N\).
By applying \(\pi^{(\mathbf{0},(0,\varepsilon,\ldots))}\), we can delete \(k-1\) labels from a labelled free generating set of the correct shape.
Thus there is \(\pi'\) in the monoid generated by \(\pi^{(\mathbf{0},(0,\varepsilon,\ldots))}\) and units such that \(L_0'' \xrightarrow[]{\pi'} L_0'''\) where \(L_0'''\) is right-heavy and has the same multiset of labels as \(L_1'\). Thus there is a invertible element \(g\) with \(L_0''' \xrightarrow[]{g} L_1'\). Together we get
\[L_0 \xrightarrow[]{g_0^{-1}} L_0'\xrightarrow[]{U'} L_0'' \xrightarrow[]{\pi'} L_0'''\xrightarrow[]{g} L_1'\xrightarrow[]{g_1^{-1}} L_1\]
so \(f=g_0^{-1}U'\pi' gg_1^{-1}\).
As \(g_0^{-1}U'\pi' gg_1^{-1}\)
belongs to the monoid generated by the group of units of \(n\operatorname{Rel}_{k,r}\) together with the elements \(U\) and \(\pi^{(\mathbf{0},(0,\varepsilon,\ldots))}\), the result follows.
\end{proof}

With generation done, we now move on to relations. We denote the set of words over the alphabet \(A\) by \(A^*\), and we will show that the congruence on the free monoid \(A^*\) defined by the natural quotient map onto \(n\operatorname{Rel}_{k,r}\) is generated by a finite set of relations.

The strategy is to define a nice submonoid \(nP_{k,1}\) of \(n\operatorname{Rel}_{k,1}\) and use the interaction between elements of \(n\operatorname{Rel}_{k,1}\) and \(nP_{k,1}\) to determine when they are equal.
\begin{definition}[Embedded Product of Free Monoids]
Let \(n\geq 1\) and \(k\geq 2\).
 We denote by \(nP_{k,1}\) the subset of \(n\text{Rel}_{k,1}\) consisting of those elements which can be expressed in the form \(L_0 \rightarrow (0)\).
\end{definition}

\begin{proposition}\label{pstructure}
Let \(n\geq 1\) and \(k\geq 2\). The set \(nP_{k,1}\) is a monoid and there is an isomorphism \(\chi_{n,k}:nP_{k,1}\to (A_k^*)^n\) such that
\[(\mathbf{0},(f)\chi_{n,k})\operatorname{tree}_{n,k,1}=((\mathbf{0})(f)\sigma_{n,k,r}^{-1}).\]
\end{proposition}
\begin{proof}
  
    Note that \(nP_{k,r}\sigma_{n,k,1}^{-1}\) is the set of elements of \(\End(\mathbb{F}_{n,k,1})\) which map the free generator to an element of \(\im(\operatorname{tree}_{n,k,1})\).
    Each such element is obtained from the free generator \(\mathbf{0}\) by a sequence of unary operations \(\alpha_{i,j}\).
   As seen in \cref{free_understanding}, the unary operations  \(\alpha_{i,j}\) generate a copy of the monoid \(\oplus_{i<n} \makeset{\alpha_{i,j}}{$j<k$}^*\cong (A_k^*)^n\). 
   Suppose that \(\alpha,\beta\in \oplus_{i<n} \makeset{\alpha_{i,j}}{$j<k$}^*\cong (A_k^*)^n\), and \(f,g,\in \End(\mathbb{F}_{n,k,1})\) satisfy 
   \[(\mathbf{0})f=(\mathbf{0})\alpha \text{ and }(\mathbf{0})g=(\mathbf{0})\beta.\]
   As \(f,g\) are endomorphisms, it follows that
   \[(\mathbf{0})fg=((\mathbf{0})\alpha)g=(\mathbf{0}g)\alpha=(\mathbf{0})\beta\alpha.\]
   Thus \(nP_{k,r}\sigma_{n,k,1}^{-1}\) is anti-isomorphic to \(\oplus_{i<n} \makeset{\alpha_{i,j}}{$j<k$}^*\cong (A_k^*)^n\). As \(\sigma\) is an anti-isomorphism the result follows.
\end{proof}

We need to use these elements to identify elements of \(n\operatorname{Rel}_{k,1}\). The following proposition allows us to do this.
\begin{proposition}\label{determinewithprjections}
   Let \(n\geq 1\) and \(k\geq 2\). For all \(f\in n\operatorname{Rel}_{k,1}\) there is a \(N_f\in \N\) such that for all \(N\geq N_f\) we have:
   \begin{enumerate}
       \item  for all \(p\in ((A_k^N)^n)\chi_{n,k}^{-1}\) we have \(fp\in nP_{k,1}\),
       \item if \(g\in n\operatorname{Rel}_{k,1}\) and \(gp=fp\) for all \(p\in ((A_k^N)^n)\chi_{n,k}^{-1}\), then \(f=g\).
   \end{enumerate}
  
\end{proposition}
\begin{proof}
Note that the set \(((A_k^N)^n)\chi_{n,k}^{-1}\sigma_{n,k,1}^{-1} \phi_{n,k,1}\) is the subset of \(\tot n M_{k,1}\) consisting of elements with take the prefix code \(\{(\mathbf{0},(\varepsilon,\ldots,\varepsilon))\}\) to an element of \(X_1\times (A_k^N)^n\).

We show the analogous claim holds in \(\tot n M_{k,r}\) which is sufficient by the anti-isomorphisms \(\sigma_{n,k,r}\circ \phi_{n,k,r}^{-1}\).
Let \(f\in \tot n M_{k,r}\) and let \(N_f\) be large enough that \(f\) can be defined by a pseudotree pair whose tree has depth less than \(N_f\).
It follows that if \(p\in \tot n M_{k,1}\) takes the prefix code \(\{(\mathbf{0},(\varepsilon,\ldots,\varepsilon))\}\) to an element of \(X_1\times (A_k^N)^n\), then \(pf\) can also be defined by a psuedotree pair with domain prefix code \(\{(\mathbf{0},(\varepsilon,\ldots,\varepsilon))\}\).

Moreover if \(pg=pf\) for all of the \(p\) which take the prefix code \(\{(\mathbf{0},(\varepsilon,\ldots,\varepsilon))\}\) to an element of \(X_1\times (A_k^N)^n\), then \(g\) and \(f\) agree at all points in their domain so are equal.
\end{proof}

We now start to build our presentation. There are many things we need to define first.
\begin{definition}[The presentation]\label{presentationdef}
    Let \(n\geq 1\) and \(k\geq 2\).
    \begin{enumerate}
        \item Let \(\Sigma\) be a finite generating set for \(n\operatorname{Rel}_{k, 1}\) containing the inverses of all invertible elements (which exists from \cref{lem:gens}),
        \item Let \(\langle \Lambda | R_\Lambda\rangle\) be a finite presentation for the monoid \(nP_{k,1}\) (which exists by \(\cref{pstructure}\)). We view the elements of \(\Lambda\) as their corresponding elements of \(\Sigma^*\) and \(R_\Lambda\) as a set of relations of \(n\operatorname{Rel}_{k,1}\),
        \item For each shrub \(s\in (A_{k}^*)^n\cong nP_{k,1}\) let \(\pi^s\in \Lambda^*\) be a string representing the corresponding element in \(nP_{k,1}\),
        \item Let \(\Phi: \Sigma^*\to n\operatorname{Rel}_{k,1}\) be the homomorphism mapping each element of \(\Sigma \) to the element it represents;
        \item For each \(x\in \Sigma\), let \((x)\operatorname{SDef}\) be a fixed string in \(\Sigma^*\) with represents the deferment of \(x\in \Sigma\) to the root system \(((\boldsymbol{0},(1,\varepsilon,\ldots,\varepsilon)))\in nT_{k,1}\) (see \cref{defermentdef}).
        Extend \(\operatorname{SDef}\) to a map from \(\Sigma^*\) via the universal property of free monoids.
        \item For all elements \(s,t\in nT_{k,1}\backslash \{(\boldsymbol{0},(\varepsilon,\ldots,\varepsilon))\}\), let \(p_{s,t}\) be a string in the invertible elements of \(\Sigma\) representing an element which maps the root system \((s)\) to the root system \((t)\).
        We then define \(\operatorname{Def}_s:(\Sigma\cup \Lambda) \to (\Sigma\cup \Lambda)^*\) by \((x)\operatorname{Def}_s=p_{(\boldsymbol{0},(1,\varepsilon,\ldots,\varepsilon)),s}^{-1}(x)\operatorname{SDef} p_{(\boldsymbol{0},(1,\varepsilon,\ldots,\varepsilon)),s}\) (here inverting a string means to write it backwards and replace each generator with its inverse). 
        \item Let \(Y\in \Sigma^*\) be a string representing the element \(\{(\boldsymbol{0},a)\}\to \{(\mathbf{0}\alpha_{i,j},a)|i<n,j<k\}\);
        \item Let \(R_0\) be the union of the following sets (where \(N_x\) is from \cref{determinewithprjections})
        \begin{enumerate}
    \item \(\makeset{(ab, \varepsilon), (ba, \varepsilon)\in \Sigma^* \times \Sigma^*}{\(a, b\in \Sigma\) are inverses in the group of units of \(n\operatorname{Rel}_{k, 1}\)}\).
        \item \(\makeset{((x)\operatorname{SDef}\operatorname{SDef}, (x)\operatorname{Def}_{(\boldsymbol{0},(11,\varepsilon,\ldots,\varepsilon))})\in \Sigma^* \times \Sigma^*}{\(x\in \Sigma\)}\).
           \item 
    \(\makeset{\left(x ,Y\prod_{s\in (A_k^1)^n}(x\pi^{s})\operatorname{Def}_{(\mathbf{0},s)}\right)\in \Sigma^*\times \Sigma^*}{\(x\in \Sigma\)}\)  where the product over the elements of \((A_k^1)^n\) is taken in any fixed order.
     \item 
    \(\makeset{\left(xY ,Y \prod_{s\in (A_k^1)^n}(x)\operatorname{Def}_{(\mathbf{0},s)}\right)\in \Sigma^*\times \Sigma^*}{\(x\in \Sigma\)}\)  where the product over the elements of \((A_k^1)^n\) is taken in any fixed order.
      
     \item \(\makeset{((x)\operatorname{Def}_{v'} (y)\operatorname{Def}_{w'}, (y)\operatorname{Def}_{w'} (x)\operatorname{Def}_{v'})\in \Sigma^* \times \Sigma^*}{ \(x,y\in \Sigma\), \(v, w\in X_1\times (A_k^1)^n\), \(v\neq w\)}\).
    \item \(\makeset{(x\pi^{s_{x,1}},\pi^{s_{x,2}})}{ where \(s_{x,1}\in (A_k^{N_x})^n\), \(s_{x,2}\in (A_k^*)^n\), and
    \((x\pi^{s_{x,1}})\Phi=(\pi^{s_{x,2}})\Phi\)}\).

\end{enumerate}
\item Let \(R_1=R_0\cup R_\Lambda\), and \(R=R_1 \cup \makeset{((x)\operatorname{SDef}, (y)\operatorname{SDef})}{\((x, y)\in R_1\)}\)
    \end{enumerate}
Let \(n\operatorname{PRel}_{k}\) be the monoid presented by \(\langle \Sigma  | R\rangle\).
\end{definition}

\begin{lemma}\label{well-defs}
    Let \(n\geq 1\) and \(k\geq 2\), and  \(s\in X_1\times (A_k^*)^n\).
    The map \(\operatorname{Def}_s:\Sigma \to \Sigma^*\) from \cref{presentationdef} induces a well defined endomorphism of the monoid \(n\operatorname{PRel}_{k,1}\).
\end{lemma}
\begin{proof}
    We check that \(\operatorname{Def}_s\) is a well-defined endomorphism.
    Note that for all \((x, y)\in R_1\), we have \(((x)\operatorname{SDef},(y)\operatorname{SDef})\in R\) and for all other \((x,y)\in R\backslash R_1\) there are \((x',y')\in R_1\) with
    \(((x')\operatorname{SDef}, (y')\operatorname{SDef})=(x,y)\).
    If \((x',y')=(x'_0,\ldots x_{j-1}'.y'_0,\ldots y_{j-1}')\), then we have (by the relations in sets (a) and (b)) that
    \begin{align*}
(x)\operatorname{SDef}&=(x')\operatorname{SDef}\operatorname{SDef}\\
&=\prod_{i<j}(x_i')\operatorname{SDef}\operatorname{SDef}\\
        &=_{n\operatorname{PRel}_{k}}\prod_{i<j}(x_i')\operatorname{Def}_{(\boldsymbol{0},(11,\varepsilon,\ldots,\varepsilon))}\\
        &= \prod_{i<j} p_{(\boldsymbol{0},(1,\varepsilon,\ldots,\varepsilon)),s}^{-1}(x_i')\operatorname{SDef} p_{(\boldsymbol{0},(1,\varepsilon,\ldots,\varepsilon)),s}\\ &=_{n\operatorname{PRel}_{k}} p_{(\boldsymbol{0},(1,\varepsilon,\ldots,\varepsilon)),s}^{-1}\left(\prod_{i<j} (x_i')\operatorname{SDef}\right) p_{(\boldsymbol{0},(1,\varepsilon,\ldots,\varepsilon)),s}\\
&= p_{(\boldsymbol{0},(1,\varepsilon,\ldots,\varepsilon)),s}^{-1}\left( (x')\operatorname{SDef}\right) p_{(\boldsymbol{0},(1,\varepsilon,\ldots,\varepsilon)),s}\\
&= p_{(\boldsymbol{0},(1,\varepsilon,\ldots,\varepsilon)),s}^{-1}\left( (y')\operatorname{SDef}\right) p_{(\boldsymbol{0},(1,\varepsilon,\ldots,\varepsilon)),s}\\
&= _{n\operatorname{PRel}_{k}} (y)\operatorname{SDef}.
    \end{align*}
So \((x)\operatorname{SDef}\) and  \((y)\operatorname{SDef}\) are equal in \({n\operatorname{PRel}_{k}}\).
Thus the map \(\operatorname{SDef}:\Sigma^*\to \Sigma^*\) induces a well defined endomorphism of the monoid \({n\operatorname{PRel}_{k}}\).
As conjugation by a unit is an automorphism, it follows (from the definition of \(\operatorname{Def}_w\)) that for all non-trivial root systems \(w\), the map \(\operatorname{Def}_w:\Sigma^* \to \Sigma^*\) also induces an endomorphism of the monoid \({n\operatorname{PRel}_{k}}\).
\end{proof}
\begin{lemma}\label{undefer}
    Let \(n\geq 1\) and \(k\geq 2\).
    For all \(x\in n\operatorname{PRel}_{k}\) we have
    \[x=_{n\operatorname{PRel}_{k}} Y \prod_{s\in  (A_{k}^1)^n}(x\pi^{s})\operatorname{Def}_{(\mathbf{0},s)}\]
    Using the deferment endomorphisms from \cref{well-defs}.
\end{lemma}
\begin{proof}
    Let \(x=x_0x_1\ldots x_{j-1}\in \Sigma^*\). By the relations in set (c) from \cref{presentationdef}, we have for all \(i< j\) that 
\[x_i=_{n\operatorname{PRel}_{k}} Y \prod_{s\in  (A_{k}^1)^n}(x_i\pi^{s})\operatorname{Def}_{(\mathbf{0},s)}.\]
 Thus
\begin{align*}
x&=_{n\operatorname{PRel}_{k}}  x_0x_1\dots x_{j-2}Y\prod_{s\in (A_k^1)^n}(x_{j-1}\pi^{s})\operatorname{Def}_{(\mathbf{0},s)}\\
  &=_{n\operatorname{PRel}_{k}}  x_0x_1\dots x_{j-3}Y\left(\prod_{s\in (X_k^1)^n}(x_{j-2})\operatorname{Def}_{(\mathbf{0},s)}\right)\left(\prod_{s\in (A_k^1)^n}(x_{j-1}\pi^{s})\operatorname{Def}_{(\mathbf{0},s)}\right)\quad \substack{\text{ using the relations}\\ \text{ in set (d)}}\\ &=_{n\operatorname{PRel}_{k}}  Y \left(\prod_{i<j-1}\prod_{s\in (X_k^1)^n}(x_{i})\operatorname{Def}_{(\mathbf{0},s)}\right)\left(\prod_{s\in (A_k^1)^n}(x_{j-1}\pi^{s})\operatorname{Def}_{(\mathbf{0},s)}\right)\quad \substack{\text{ using the relations}\\ \text{ in set (d)}}\\
   &=_{n\operatorname{PRel}_{k}}  Y \prod_{s\in (X_k^1)^n}\left(\left(\prod_{i<j-1} (x_i)\operatorname{Def}_{(\mathbf{0},s)}\right)(x_{j-1}\pi^{s})\operatorname{Def}_{(\mathbf{0},s)}\right)\quad \substack{\text{ using the relations}\\ \text{ in set (e)}} \\ 
&=_{n\operatorname{PRel}_{k}}  Y \prod_{s\in (X_k^1)^n}(x\pi^{s})\operatorname{Def}_{(\mathbf{0},s)}\quad \text{ using the relations in set (e)} .
\end{align*}

\end{proof}
\begin{lemma}\label{lem:pres}
Let \(n\geq 1\) and \(k\geq 2\). The monoid \(n\operatorname{Rel}_{k,1}\) is isomorphic to the finitely presented monoid \(n\operatorname{PRel}_{k}\) from \cref{presentationdef}.
\end{lemma}

\begin{proof}
Note that \(n\operatorname{PRel}_{k}\) is generated by \(\Sigma\) (see \cref{well-defs}) and  \(n\operatorname{Rel}_{k,1}\) is generated by \((\Sigma)\Phi\). 
It is routine to verify that the relations in the presentation for \(n\operatorname{PRel}_{k}\) hold in the monoid \(n\operatorname{Rel}_{k,1}\).
Suppose that \(g,h\in \Sigma^*\) are arbitrary with \((g)\Phi=(h)\Phi\). It suffices to show that \(g\) and \(h\) are equal in \(n\operatorname{PRel}_{k}\).

Using the relations in set (f) from \cref{presentationdef}, it follows that there is \(N\in \N\) such that for all \(p\in (A_k^N)^n\) we have \(g\pi^p,h\pi^p\in \langle \Lambda \rangle\).
As we have a presentation for the monoid generated by \(\Lambda\), it follows from the equality \((g)\Phi=(h)\Phi\) that \(g\pi^p=_{n\operatorname{PRel}_{k}} h\pi^p\) for all \(p\in (A_k^N)^n\).

We need only show that the equalities \(g\pi^p=_{n\operatorname{PRel}_{k}} h\pi^p\) for all \(p\in (A_k^N)^n\) together imply that \(g=_{n\operatorname{PRel}_{k}} h\). We show this by induction on \(N\). The base case of \(N=0\) is immediate.

We show the statement follows from the inductive hypothesis:
\begin{itemize}
    \item the equalities \(g\pi^p=_{n\operatorname{PRel}_{k}} h\pi^p\) for all \(p\in (A_k^{N-1})^n\) together imply that \(g=_{n\operatorname{PRel}_{k}} h\).
\end{itemize}

Note that \((A_k^{N-1})^n(A_k^{1})^n=(A_k^{N})^n\). 
So for all \(s\in (A_k^{N-1})^n\) and \(s'\in (A_k^{1})^n\) we have
\[g\pi^s\pi^{s'}=_{n\operatorname{PRel}_{k}} h\pi^s\pi^{s'}.\]
From this it follows that for all \(s\in (A_k^{N-1})^n\) we have
\[Y \prod_{s'\in (A_k^1)^n} (g\pi^s \pi^{s'})\operatorname{Def}_{(\mathbf{0},s')}=_{n\operatorname{PRel}_{k}} Y \prod_{s'\in (A_k^1)^n} (h\pi^s \pi^{s'})\operatorname{Def}_{(\mathbf{0},s')}.\]

Thus, by \cref{undefer}, for all \(s\in (A_k^{N-1})^n\) we have \(g\pi^s =_{n\operatorname{PRel}_{k}} h\pi^s\). The result follows from the inductive hypothesis.

\end{proof}
\begin{corollary}\label{finishr=1}
    Let \(n\geq 1\) and \(k\geq 2\).
The monoid \(\operatorname{tot}nM_{k,1}\) is finitely presented.
\end{corollary}

\begin{proof}
    This follows from Lemma~\ref{lem:pres}, together with the fact that \(\phi_{n,k,r}\sigma_{n,k,1}:\operatorname{tot}nM_{k, 1}\to n\operatorname{Rel}_{k, 1}\) is an anti-isomorphism.
\end{proof}

We can now prove \cref{main2}.
\begin{theorem}\label{done}
Let \(n,r\geq 1\) and \(k\geq 2\).
The monoid \(\operatorname{tot}nM_{k,r}\) is finitely presented.
\end{theorem}
\begin{proof}
By partitioning \(\mathfrak{C}_{n,k}\) into \(\geq r\) clopen sets, we may fix some injective \(f\in \operatorname{tot}nM_{k,r}\) whose image is contained in \(\{\mathbf{0}\}\times \mathfrak{C}_{n,k}\). Let \(g\in \operatorname{tot}nM_{k,r}\) be a fixed element which agrees with \(f^{-1}\) on the image of \(f\).
In particular \(fg\) is the identity function. 

For each clopen set \(U\subseteq \mathfrak{C}_{n,k,r}\), we define the monoid \(M_U\) by 
\[M_U:= \makeset{h\in \operatorname{tot}nM_{k,r}}{\((U)h\subseteq U\) and \((x)h=x\) for all \(x\notin U\)}.\]
As each clopen set \(U\) is compact and open, there is always a \(m\in \N\) and a set of shrubberies \(S_U\subseteq X_r\times A_k^m\) such that \(U=\cup_{w\in S_U} w\mathfrak{C}_{n,r}\). In particular, \(M_U\) is isomorphic to \(\operatorname{tot}nM_{k,|S_U|}\).
In particular, \(M_{\{\mathbf{0}\}\times \mathfrak{C}_{n,k}}\) is isomorphic to \(\operatorname{tot}nM_{k,1}\).
By \cref{finishr=1}, let \(\langle \Sigma | R\rangle\) be a finite presentation for \(M_{\{\mathbf{0}\}\times \mathfrak{C}_{n,k}}\).

As \(f\) is injective, the submonoid \(M_{\im(f)}\) of \(M_{\{\mathbf{0}\}\times \mathfrak{C}_{n,k}}\) is isomorphic to \(\operatorname{tot}nM_{k,r}\) via the isomorphism \(\phi\) defined by
\((x)\phi = f x g.\) 
By \cref{lem:gens}, \cref{rewritedefined} and \cref{psidefined}, the monoid \(\operatorname{tot}nM_{k,r}\) is finitely generated.
Thus there is a finite subset \(\Delta\) of \(\Sigma^*\) such that \(M_{\im(f)} \cong \operatorname{tot}nM_{k,r}\) is the set of elements of \(M_{\{\mathbf{0}\}\times \mathfrak{C}_{n,k}}\) which can be represented as products of strings from \(\Delta\).

We write \(\Delta^*\) for the submonoid of \(\Sigma^*\) generated by \(\Delta\).
As \((\im(f))f\subseteq \im(f)\), we can fix some \(f'\in  \Delta^*\) representing the element which agrees with \(f\) on the set \(\im(f)\) and fixes all other points.
Note that \(ff'\) and \(ff\) represent the same element of \(\operatorname{tot}nM_{k,r}\). 
More generally, for all \(d\in \Delta^*\), the equality \(fdf=fdf'\) holds in \(\operatorname{tot}nM_{k,r}\).
On the other hand, as \(f'\) is injective, for all \(d\in \Delta\), there is \(d'\in \Delta^*\) such that \(d=f'd'\).

For each \(s\in \Sigma\cup \{f,g\}\), let \(d_s\in \Delta^*\) be such that \(fd_sg\) is equal to \(gs\) in \(\operatorname{tot}nM_{k,r}\). 
Let \(P\) be the monoid with presentation
\[P=\makepres{ \Sigma\cup \{f,g\}}{the relations from \(R\) hold and 
\(fg=\varepsilon\),\\ for all \(s\in \Sigma\cup \{f,g\}\) we have \(gs=fd_s g\),\\ for all \(d\in \Delta\) we have \(d=f'd'\), \(fd'f=fd'f'\) and \(f'df=fdf'\)}.\]
We show that \(P\) is a presentation for \(\operatorname{tot}nM_{k,r}\).
Let \(\psi:(\Sigma\cup \{f,g\})^*\to \operatorname{tot}nM_{k,r}\) be the homomorphism which maps \(f\) to \(f\), \(g\) to \(g\) and each element of \(\Sigma\) to the corresponding element of \(M_{\{\mathbf{0}\}\times \mathfrak{C}_{n,k}}\).
We need to show that \(\psi\) is surjective and that the kernel of \(\phi\) is generated as a congruence by the relations from \(P\).

 We have already shown that all the relations in the presentation \(P\) hold in \(\operatorname{tot}nM_{k,r}\). It follows that the kernel of \(\psi\) includes the relations of \(P\).
Thus we have a well-defined homeomorphis \(\psi:P\to \operatorname{tot}nM_{k,r}\).
Moreover, \(\im(\psi)\supseteq (f  \Delta^*  g)\psi = f  M_{\im(f)}  g=(M_{\im(f)})\phi\).
So \(\psi\) is surjective.

As \(\langle \Sigma |R\rangle\) presents \(M_{\{\mathbf{0}\}\times \mathfrak{C}_{n,k}}\), it follows that the kernel of \(\psi|_{\Sigma^*}\) is generated by \(R\). 
As \(\phi\) is injective, it follows that the kernel of \(\psi|_{\Sigma^*}\cdot \phi\) is also generated by \(R\). 
Thus if \(s,t\in \Sigma^*\) and \((fsg)\psi=(ftg)\psi\), then \((s)\psi|_{\Sigma^*} \phi=f(s)\psi g=f(t)\psi g=(t)\psi|_{\Sigma^*} \phi\) and so \(s\) and \(t\) are related by \(P\).
So \(s,t\in \Sigma^*\) and \((fsg)\psi=(ftg)\psi\), then \(fsg\) and \(ftg\) are related by \(P\). It follows that the kernel of \(\psi|_{f\Sigma^* g}\) is contained in the congruence on the free monoid defined by \(P\).
In particular, whenever two strings are equal in \(P\) to elements of \(f \Delta ^* g\) and have the same image under \(\psi\), then they are equal in \(P\).

It therefore suffices to show that every string in \((\Sigma\cup \{f,g\})^*\) is equal in \(P\) to an element of \(f \Delta ^* g\). From the relation \(fg=\varepsilon\), the identity element has this form. 
It thus suffices to show that the set of elements of $(\Sigma\cup \{f,g\})^*$ equal in \(P\) to strings from \(f  \Delta^* g\) is a right ideal.
Let \(s\in \Sigma \cup \{f,g\}\) and \(fd_0d_1\ldots d_{p-1} g\in f  \Delta^* g\) be arbitrary.
We show that \(fd_0d_1\ldots d_{p-1} gs\) is equal in \(P\) to an element of \(f  \Delta^* g\).
\begin{align*}
    fd_0d_1\ldots d_{p-1} gs &=_{P}f d_0d_1\ldots d_{p-1}  f d_s g\\
    &=_{P}f d_0f'd_1'f'd_2'\ldots f'd_{p-1}'  f d_s g\\
    &=_{P}f d_0fd_1'f'd_2'\ldots f'd_{p-1}'  f d_s g\\
    &=_{P}f d_0fd_1'fd_2'\ldots fd_{p-1}'  f d_s g\\
    &=_{P}f d_0fd_1'fd_2'\ldots fd_{p-1}'  f' d_s g\\
    &=_{P}f d_0f'd_1'f'd_2'\ldots f'd_{p-1}'  f' d_s g\\
    &=_{P}f d_0d_1d_2\ldots d_{p-1}  f' d_s g.
\end{align*}
Note that \(d_0d_1d_2\ldots d_{p-1}  f' d_s\in \Delta^*\), so \(fd_0d_1d_2\ldots d_{p-1}  f' d_s g\in f\Delta^* g\) as required.
\end{proof}

\subsection{Explicit presentation for $\operatorname{tot}M_{2,1}$}
We show in \cref{done} that the monoids tot\(nM_{k,r}\) are finitely presented. The presentations are rather technical so we unpack the construction to give an explicit presentation in the case of tot\(M_{2,1}\) which was Thompson's original monoid. Unfortunately the presentation is rather long.
\begin{theorem}\label{explicit}
        The monoid \(\operatorname{Rel}_{2,1}\) is given by the presentation with generating set \(u,v,\pi_L, U\) subject to the following 82 relations:
        \begin{enumerate}
            \item \(u^6 =\varepsilon\), \(v^3=\varepsilon\),
            \item  \(\overline{u^6} =\varepsilon\), \(\overline{v^3}=\varepsilon\),
            \item \(\overline{\overline{x}}=u^4v\overline{x}v^2 u^2\) for \(x\in \{u,v,U,\pi_L\}\),
               \item \(\overline{\overline{\overline{x}}}=\overline{u^4v\overline{x}v^2 u^2}\) for \(x\in \{u,v,U,\pi_L\}\),
            \item \(x=U  ((vu^3)^2\overline{x\pi_L}  (u^3v^2)^2)\overline{x\pi_R}\) for \(x\in \{u,v,U,\pi_L\}\),
             \item \(\overline{x}=\overline{U  ((vu^3)^2\overline{x\pi_L}  (u^3v^2)^2)\overline{x\pi_R}}\) for \(x\in \{u,v,U,\pi_L\}\),
            \item \(xU=U (vu^3)^2\overline{x\pi_L}(u^3v^2)^2 \overline{x\pi_R}\) for \(x\in \{u,v,U,\pi_L\}\),
            \item\(\overline{xU}=\overline{U (vu^3)^2\overline{x\pi_L}(u^3v^2)^2 \overline{x\pi_R}}\) for \(x\in \{u,v,U,\pi_L\}\),
            \item \((vu^3)^2\overline{x}(u^3v^2)^2 \overline{y}=\overline{y}(vu^3)^2\overline{x}(u^3v^2)^2 \) for \(x,y\in \{u,v,U,\pi_L\}\),
            \item \(\overline{(vu^3)^2\overline{x}(u^3v^2)^2 \overline{y}}=\overline{\overline{y}(vu^3)^2\overline{x}(u^3v^2)^2} \) for \(x,y\in \{u,v,U,\pi_L\}\),
            \item \(U\pi_L=\varepsilon\), \(U\pi_R=\varepsilon\), \(u\pi_L\pi_L=\pi_L\pi_R\), \(u\pi_L\pi_R=\pi_L\pi_L\), \(u\pi_R\pi_L=\pi_R\pi_R\pi_R\), \(u\pi_R\pi_R\pi_L=\pi_R\pi_L\), \(u\pi_R\pi_R\pi_R=\pi_R\pi_R\pi_L\), \(v\pi_L\pi_L=\pi_L\pi_L\), \(v\pi_L\pi_R=\pi_R\pi_R\), \(v\pi_R\pi_L=\pi_L\pi_R\), \(v\pi_R\pi_R=\pi_R\pi_L\),
            \item \(\overline{U\pi_L}=\varepsilon\), \(\overline{U\pi_R}=\varepsilon\), \(\overline{u\pi_L\pi_L}=\overline{\pi_L\pi_R}\), \(\overline{u\pi_L\pi_R}=\overline{\pi_L\pi_L}\), \(\overline{u\pi_R\pi_L}=\overline{\pi_R\pi_R\pi_R}\), \(\overline{u\pi_R\pi_R\pi_L}=\overline{\pi_R\pi_L}\), \(\overline{u\pi_R\pi_R\pi_R}=\overline{\pi_R\pi_R\pi_L}\), \(\overline{v\pi_L\pi_L}=\overline{\pi_L\pi_L}\), \(\overline{v\pi_L\pi_R}=\overline{\pi_R\pi_R}\), \(\overline{v\pi_R\pi_L}=\overline{\pi_L\pi_R}\), \(\overline{v\pi_R\pi_R}=\overline{\pi_R\pi_L}\),
        \end{enumerate}
        where \(\pi_R=(u^3v^2)^2\pi_L\) and   the overline operation \(\overline{\cdot}:\{u,v,\pi_L,U\}^*\to \{u,v,\pi_L,U\}^*\) is the homomorphism defined by
        \begin{enumerate}
            \item \(\overline{U}=Uvu^5v^2u^3vu^5v\pi_L\),
            \item \(\overline{\pi_L}={u^3v{u^2v^2u^3vu^5v}u^3}\pi_L\),
            \item \(\overline{u}=v^2{u^3vu^2v^2u^3vu^5v}u^4{v^2uv^2u^3vu^4v^2}u^5{v^2uv^2u^3vu^4v^2}u^3{vu^2v^2u^3vu^5vu^4}v\),
            \item \(\overline{v}=u^3v{u^2v^2u^3vu^5v}u^3v^2u^4vu^3v^2uv^2u^3vu^4v^2u^3\).
        \end{enumerate}
\end{theorem}
\begin{proof}
    We start by building the presentation from \cref{rewritedefined} when \(n=1\) and \(k=2\).     
    In \cref{presentationdef}, \(\Sigma\) is taken to be any finite generating set for \(\operatorname{Rel}_{2,1}\).
    Thus by \cref{lem:gens}, we can take \(\Sigma\) to be any generating set for Thompson's group \(V\) together with the elements \(U\) and \(\pi^{(\mathbf{0},(0))}\). We write \(\pi_L\) for \(\pi^{(\mathbf{0},(0))}\).
    There are many known generating sets for \(V\) (one is constructed in the proof of \cref{lem:pres}). In order to avoid our presentation getting even longer, we pick a generating set for \(V\) of size \(2\).
    One is given in the paper \cite{bleak2017infinite} (the generators are listed in Corollary 5.2 of that paper but there is a typo in the second one which is corrected in their tree pair diagrams and here). Moreover these generators have finite order so form a monoid generating set.
    The two generators are given by
    \[(ab)(c(de))\xrightarrow[]{u} (ba)(e(cd))\quad\text{ and }\quad (ab)(cd)\xrightarrow[]{v} (ad)(bc).\]
    As such, we pick \(\Sigma=\{U,\pi_L,u,v\}\) (this technically doesn't satisfy the requirement for inverses of invertible elements to be in \(\Sigma\) but this will not be an issue as we have a monoid generating set). 
    We next need a presentation \(\langle \Lambda|R_\Lambda\rangle\) for \(P_{2,1}\). 
    In this case \(P_{2,1}\) is generated by the two elements
    \[(ab)\xrightarrow[]{} a \quad \text{and}\quad (ab)\xrightarrow[]{} b.\]
    The left element happens to coincide with the element \(\pi_L\). We denote the right element by \(\pi_R\).
    As such, we can write \(\Lambda=\{\pi_L,\pi_R\}\) and \(R_{\Lambda}=\varnothing\).
    For point 3 of \cref{presentationdef}, we make the unique valid choice for representing elements of \(\{\pi_L,\pi_R\}^*\).
    
    Note that as 
\[(ab)(cd)\xrightarrow[]{u^3}(ba)(cd)\xrightarrow[]{v^2} (bc)(da)\xrightarrow[]{u^3}(cb)(da)\xrightarrow[]{v^2}(cd)(ab),\]
\((u^3v^2)^2\) is the element \(ab\xrightarrow[]{(u^3v^2)^2} ba\).
    In particular, for point 2 of \cref{presentationdef} we can choose \(\pi_R\) to be the string \((u^3v^2)^2\pi_L\). 
    Point 4 of \cref{presentationdef} doesn't require any choices. Point 5 of \cref{presentationdef} requires us to choose for each \(x\in \Sigma=\{U,\pi_L,u,v\}\) a string \((x)\operatorname{SDef}\in \Sigma^*\) representing its deferment to the \(1\)-cone.
   In particular we need
   \[ab\xrightarrow[]{(U)\operatorname{SDef}}a(bb),\quad a(bc)\xrightarrow[]{(\pi_L)\operatorname{SDef}} ab,\]
   \[ a((bc)(d(ef)))\xrightarrow[]{(u)\operatorname{SDef}} a((cb)(f(de))),\quad a((bc)(de))\xrightarrow[]{(v)\operatorname{SDef}} a((be)(cd)).\]
 By performing the following calculations, we see that we can choose \((U)\operatorname{SDef}=\overline{U}\), \((\pi_L)\operatorname{SDef}=\overline{\pi_L}\), \((u)\operatorname{SDef}=\overline{u}\) and \((v)\operatorname{SDef}=\overline{v}\).
\[(ab)(cd)\xrightarrow[]{U} ((ab)(cd))((ab)(cd))\xrightarrow[]{vu^5v^2u^3vu^5v} ((ab)((cd)(cd)))(ab)\xrightarrow[]{\pi_L}(ab)((cd)(cd))\]
\[a(bc)\xrightarrow[]{u^3vu^2v^2u^3vu^5vu^3} (ac)b\xrightarrow[]{\pi_L} ac\]
\[a((bc)(d(ef)))\xrightarrow[]{v^2{u^3vu^2v^2u^3vu^5v}u^4{v^2uv^2u^3vu^4v^2}u^5{v^2uv^2u^3vu^4v^2}u^3{vu^2v^2u^3vu^5vu^4}v} a((cb)(f(de)))\]
\[a((bc)(de))\xrightarrow[]{u^3v{u^2v^2u^3vu^5v}u^3v^2u^4vu^3v^2uv^2u^3vu^4v^2u^3} a((be)(cd))\]

For point 6 of \cref{presentationdef}, recall from above that
\((u^3v^2)^2\) is the element which swaps the prefixes \(0\) and \(1\). 
Thus the map \(x\mapsto (vu^3)^2\cdot (x)\operatorname{SDef} \cdot (u^3v^2)^2\) is the deferment to the \(0\)-cone.
Moreover the element \(v^2u^2\) maps the \(1\)-cone to the \(11\) cone, so the map
\(x\mapsto u^4v\cdot (x)\operatorname{SDef} \cdot v^2u^2\) is the deferment to the \(11\)-cone.
For point 7 of \cref{presentationdef}, the element \(Y\) is
\(a \xrightarrow[]{Y} aa\), which coincides with the generator \(U\).

We can now list the relations from \cref{presentationdef}. The relations in sets \((a)-(e)\) of \cref{presentationdef} are precisely those listed in points \(1,3,5,7,9\) of this Theorem respectively. The relations in set \((f)\) are almost exactly those in point 11 of this theorem. 
We note that \(N_u=3\) however we do not need all words of length \(3\), as for example the relations \(u\pi_L\pi_L\pi_L=u\pi_L\pi_R\pi_L\) and \(u\pi_L\pi_L\pi_R=u\pi_L\pi_R\pi_R\) are both implied by \(u\pi_L\pi_L=u\pi_L\pi_R\). 
As such we make these simplifications in a few places where possible.
The set \(R_\Lambda\) is empty in this case and the remaining relations are those in the even points of this theorem.
\end{proof}
\begin{corollary}\label{explictpresfinal}
     The monoid tot\(M_{2,1}\) is given by the presentation with generating set \(u,v,\pi_L, U\) subject to the following 82 relations:
        \begin{enumerate}
            \item \(u^6 =\varepsilon\), \(v^3=\varepsilon\),
            \item  \(\overline{u^6} =\varepsilon\), \(\overline{v^3}=\varepsilon\),
            \item \(\overline{\overline{x}}=u^2v^2\overline{x}vu^4\) for \(x\in \{u,v,U,\pi_L\}\),
               \item \(\overline{\overline{\overline{x}}}=\overline{u^2v^2\overline{x}v u^4}\) for \(x\in \{u,v,U,\pi_L\}\),
            \item \(x=\overline{\pi_R x}(v^2u^3)^2\overline{\pi_Lx}(u^3v)^2U\) for \(x\in \{u,v,U,\pi_L\}\),
             \item \(\overline{x}=\overline{\overline{\pi_R x}(v^2u^3)^2\overline{\pi_Lx}(u^3v)^2U}\) for \(x\in \{u,v,U,\pi_L\}\),
            \item \(Ux=\overline{\pi_R x}(v^2u^3)\overline{\pi_L x}(u^3v)^2U\) for \(x\in \{u,v,U,\pi_L\}\),
            \item \(\overline{Ux}=\overline{\overline{\pi_R x}(v^2u^3)\overline{\pi_L x}(u^3v)^2U}\) for \(x\in \{u,v,U,\pi_L\}\),
            \item \(\overline{y}(v^2u^3)^2\overline{x}(u^3v)^2=(v^2u^3)^2\overline{x}(u^3v)^2\overline{y}\) for \(x,y\in \{u,v,U,\pi_L\}\),
            \item \(\overline{\overline{y}(v^2u^3)^2\overline{x}(u^3v)^2}=\overline{(v^2u^3)^2\overline{x}(u^3v)^2\overline{y}}\) for \(x,y\in \{u,v,U,\pi_L\}\),
            \item \(\pi_LU=\varepsilon\), \(\pi_RU=\varepsilon\), \(\pi_L\pi_Lu=\pi_R\pi_L\), \(\pi_R\pi_L u=\pi_L\pi_L\) , \(\pi_L\pi_R u=\pi_R\pi_R\pi_R\), \(\pi_L\pi_R\pi_R u=\pi_L\pi_R\), \(\pi_R\pi_R\pi_R u=\pi_L\pi_R\pi_R\), 
            \(\pi_L\pi_L v=\pi_L\pi_L\), \(\pi_R\pi_L v=\pi_R\pi_R\), \(\pi_L\pi_R v=\pi_R\pi_L\), \(\pi_R\pi_R v=\pi_L\pi_R\),
            \item \(\overline{\pi_L U}=\varepsilon\), \(\overline{\pi_R U}=\varepsilon\), \(\overline{\pi_L\pi_L u}=\overline{\pi_R\pi_L}\), \(\overline{\pi_R\pi_L u}=\overline{\pi_L\pi_L}\) , \(\overline{\pi_L\pi_R u}=\overline{\pi_R\pi_R\pi_R}\), \(\overline{\pi_L\pi_R\pi_R u}=\overline{\pi_L\pi_R}\) , \(\overline{\pi_R\pi_R\pi_R u}=\overline{\pi_L\pi_R\pi_R}\), \(\overline{\pi_L\pi_L v}=\overline{\pi_L\pi_L}\), \(\overline{\pi_R\pi_L v}=\overline{\pi_R\pi_R}\), \(\overline{\pi_L\pi_R v}=\overline{\pi_R\pi_L}\), \(\overline{\pi_R\pi_R v}=\overline{\pi_L\pi_R}\),
        \end{enumerate}
        where \(\pi_R=\pi_L(v^2u^3)^2\) and the overline operation \(\overline{\cdot}:\{u,v,\pi_L,U\}^*\to \{u,v,\pi_L,U\}^*\) is the homomorphism defined by
        \begin{enumerate}
            \item \(\overline{U}=\pi_Lvu^5vu^3v^2u^5vU\),
            \item \(\overline{\pi_L}=\pi_L u^3vu^5vu^3v^2u^2vu^3\),
            \item \(\overline{u}=vu^4vu^5vu^3v^2u^2vu^3v^2u^4vu^3v^2uv^2u^5v^2u^4vu^3v^2uv^2uv^2u^4vu^5vu^3v^2u^2vu^3v^2\),
            \item \(\overline{v}=u^3v^2u^4vu^3v^2uv^2u^3vu^4v^2u^3vu^5vu^3v^2u^2vu^3\).
        \end{enumerate}    
\end{corollary}
\begin{proof}
        By \cref{lem:pres} and \cref{psidefined}, we can obtain a presentation for tot\(M_{2,1}\) by reversing the relations of the presentation from \cref{explicit}.
\end{proof}

\section{Closing Comments and Questions}

There are a few small issues with extending the work here to \(M_{k,r}\). For an algebra \(A\), we write \(\PEnd(A)\) for the monoid of homomorphisms between subalgebras of \(A\) (considered as partial functions on \(A\)).

Note that \(\mb{F}_{n,k,1}\) has uncountably many subalgebras for all \(n\geq 1\) and \(k\geq 2\).
In particular \(\PEnd(\mb{F}_{n,k,1})\) is uncountable, and hence \(\PEnd(\mb{F}_{n,k,1})\not\cong nM_{k,1}\) as defined by Birget in \cite{birget2020monoid}.

Birget's monoids are originally  defined in terms of finitely generated right ideals of \(\{0,1,\dots,k-1\}^\ast\) which corresponds to finitely generated subalgebras of \(\mb{F}_{1,k,1}\).
The uncountability of \(\PEnd(\mb{F}_{n,k,1})\) comes from the fact that it allows endomorphisms on non-finitely generated subalgebras, so perhaps restricting \(\PEnd\) would yield isomorphisms.
\begin{question}
    The family of partial endomorphisms of \(\mathbb{F}_{1,2,1}\) with finitely generated domain a monoid? If so is this monoid isomorphic to \(M_{2,1}\)?
\end{question}
Birget notes in \cite{birget2020monoid} that it is an open question whether the partial monoids \(nM_{k,1}\) are finitely presented. The methods presented in this paper may be modifiable to work with these structures, as well as the more general \(nM_{k,r}\).
\begin{question}
    Is the monoid \(M_{2,1}\) finitely presented?
\end{question}

\bibliography{biblio}{}

\bibliographystyle{plain}

\end{document}